\newcommand{\eps}{\varepsilon}
\newcommand{\R}{\mathbb{R}}
\newcommand{\C}{\mathbb{C}}
\renewcommand{\phi}{\varphi}
\newcommand{\mcl}{\mathcal{L}}
\newcommand{\spn}{\text{\span}}
\newcommand{\Rg}{\mathrm{Rg}}
\renewcommand{\Re}{\mathrm{Re} \,}
\newtheorem*{thm*}{Theorem}
\newtheorem{prop}{Proposition}
\newtheorem{lemma}[prop]{Lemma}
\newtheorem{corollary}[prop]{Corollary}
\newtheorem{thmlocal}[prop]{Theorem}
\newtheorem{hyp}{Hypothesis}
\newtheorem{remark}[prop]{Remark}
\numberwithin{equation}{section}
\numberwithin{prop}{section}
\renewcommand{\spn}{\mathrm{span}}
\renewcommand{\u}{\mathbf{u}}
\newcommand{\q}{\mathbf{q}}
\newcommand{\g}{\mathbf{g}}
\begin{document}
	\begin{center}
		{\fontsize{15}{15}\fontseries{b}\selectfont{Front propagation close to the onset of instability}}\\[0.2in]
		Montie Avery \\[0.1in]
		\textit{\footnotesize 
			Boston University, Department of Mathematics and Statistics, 665 Commonwealth Ave, Boston, MA, 02215\\
		}
	\end{center}
	
	\begin{abstract} 
		We describe the resulting spatiotemporal dynamics when a homogeneous equilibrium loses stability in a spatially extended system. More precisely, we consider reaction-diffusion systems, assuming only that the reaction kinetics undergo a transcritical, saddle-node, or supercritical pitchfork bifurcation as a parameter passes through zero. We construct traveling front solutions which describe the invasion of the now-unstable state by a nearby stable state. We show that these fronts are marginally spectrally stable near the bifurcation point, which, together with recent advances in the theory of front propagation into unstable states, establishes that these fronts govern the dynamics of localized perturbation to the unstable state. Our proofs are based on functional analytic tools to study the existence and eigenvalue problems for fronts, which become singularly perturbed after a natural rescaling. 
	\end{abstract}
	
	\section{Introduction}
	Invasion fronts play an important role in mediating transitions from one state to another in many physical systems. A common scenario is when a trivial background state becomes unstable through a bifurcation. Localized perturbations to this unstable state then grow and spread, and a new stable state is selected in the wake of this invasion process. A fundamental question is then to predict both the spreading speed and what new state is selected in the wake. 
	
	In the mathematics literature, front propagation into unstable states is often studied in scalar equations which admit comparison principles \cite{Bramson1, Bramson2, Uchiyama, Comparison1, Comparison2, Lau, HamelPeriodic, Graham, AnHendersonRyzhik1, AnHendersonRyzhik2}. However, many experiments on front propagation focus on more complex systems which do not admit comparison principles \cite{vanSaarloosReview}. The \emph{marginal stability conjecture} (see e.g. \cite{vanSaarloosMarginalStability, vanSaarloosReview} and references therein, or \cite{CAMS} for a recent mathematical perspective) provides a framework for predicting front invasion speeds in broad classes of systems, in particular without comparison principles. It asserts that selected invasion speeds are those for which there exists a corresponding traveling front solution which is marginally spectrally stable. The marginal stability conjecture was recently proved for systems of parabolic equations (including higher order parabolic equations and multi-component systems) in \cite{CAMS, AverySystems}. 
	
	The marginally stable spectrum associated to a selected front may be either essential spectrum or point spectrum. Since essential spectrum is invariant under compact perturbations, the dynamics in the former case are governed by the tail dynamics in the leading edge of the front, and so the fronts are said to be \emph{pulled}, and the associated speed is said to be linearly determined. In the former case, the dynamics are driven by the localized eigenmode near the front interface, and so the fronts are said to be \emph{pushed}, and the speed is said to be nonlinearly determined. 
	
	It is commonly suggested in the physics literature \cite{vanSaarloosReview} that when a trivial (that is, spatially constant) state loses stability through a supercritical bifurcation --- that is, a bifurcation in which another nearby state simultaneously becomes stable --- one should observe propagation of small amplitude, pulled fronts. The main result of the present work is to confirm this picture for the three most common types of bifurcation. 
	
	To that end, we consider reaction-diffusion systems,
	\begin{align}
		\u_t = D \u_{xx} + f(\u;\mu), \quad \u = \u(x,t) \in \R^n, \quad x \in \R, \quad t > 0, \label{e: RD}
	\end{align}
	with strictly positive definite diffusion matrix $D$ and smooth, parameter-dependent nonlinearity $f : \R^n \times \R \to \R^n$. We may refer to $f$ as the \emph{reaction kinetics}. Our main result can be summarized as follows. 
	\begin{thmlocal}\label{t: main}
		Assume that $f(\u; \mu)$ undergoes a transcritical, saddle-node, or supercritical pitchfork bifurcation at $(\u, \mu) = (0, 0)$, with all eigenvalues of the linearization $f_u (0; 0)$ negative except for the simple neutral eigenvalue associated with the bifurcation. Then \eqref{e: RD} admits marginally stable pulled front solutions, which describe the invasion of the now-unstable state. 
	\end{thmlocal}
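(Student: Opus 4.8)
The plan is to reduce \eqref{e: RD} near $(\u,\mu)=(0,0)$ to a scalar Fisher--KPP--type equation for the neutral mode, import the classical pulled front from there, and lift it to the full system by treating the strictly stable directions as a singular perturbation. Concretely, I would first put $f_u(0;\mu)$ into block-diagonal form, writing $\u = a\,e_0 + b$ with $a\in\R$ along the neutral eigenvector $e_0$ and $b$ in the spectral subspace of the remaining eigenvalues, on which the restriction $A(\mu)$ of $f_u(0;\mu)$ is invertible and stable. For each bifurcation type I would record the amplitude scale $\delta(\mu)$ --- $\delta=\mu$ (transcritical), $\delta=\sqrt{\mu}$ (saddle-node, supercritical pitchfork) --- and the matching space/time scales, chosen so that, after rescaling $a=\delta v+O(\delta^2)$, the $v$-equation converges as $\mu\downarrow 0$ to a canonical scalar equation $v_\tau=d_a v_{\zeta\zeta}+g(v)$, with effective diffusion $d_a>0$ and reduced nonlinearity $g$ of monostable type ($g(v)=v-v^2$ in the transcritical case and, after recentering at the unstable equilibrium, in the saddle-node case; $g(v)=v-v^3$ for the pitchfork). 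Each such equation has a pulled front invading the linearly unstable state $v=0$ at the linear spreading speed $c_{\mathrm{lin}}^{0}=2\sqrt{d_a g'(0)}$, decaying like $(\alpha+\beta\zeta)e^{-\eta_*^{0}\zeta}$ with $\beta\neq 0$. In these scalings the $b$-equation acquires a factor $\mu$ in front of its highest-order terms while the invertible part $A(\mu)$ survives at order one, so the rescaled travelling-wave problem is \emph{singularly perturbed}, with $b$ slaved to $a$ through $A b+(\text{quadratic in }a)=O(\mu)$.

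\emph{Existence of the front.} Next I would construct the front for small $\mu>0$ by a far-field/core decomposition in the spirit of \cite{CAMS}. Since the strictly stable block affects the dispersion relation of the linearization at $\u\equiv 0$ only at higher order in $\mu$ near the relevant scales, that dispersion relation has a simple pinched double root continuing the scalar one, giving a linear spreading speed $c_{\mathrm{lin}}(\mu)=c_{\mathrm{lin}}^0+O(\mu)$ and critical decay rate $\eta_*(\mu)=\eta_*^0+O(\mu)$. Writing the prospective front as $\q=\chi_+\q_+ + w$ --- with $\chi_+$ a cutoff near $+\infty$, $\q_+$ the leading-edge ansatz carrying the $\zeta e^{-\eta_*\zeta}$ behaviour at the critical speed, and $w$ in an exponentially weighted space enforcing strictly faster decay --- the travelling-wave equation becomes $F(w,c,\mu)=0$ for the pair $(w,c)$, with the weight forcing $c=c_{\mathrm{lin}}(\mu)$. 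Eliminating the slaved $b$-component first (legitimate since $A(\mu)$ is invertible) reduces this to a regular perturbation of the scalar KPP front problem at $\mu=0$, where the linearization is Fredholm of index zero and invertible once $c$ is appended as a parameter; the implicit function theorem then yields a branch $\q_*(\cdot;\mu)$, $c_*(\mu)=c_{\mathrm{lin}}(\mu)$, of fronts connecting the stable equilibrium $\u^*(\mu)$ to the unstable state, with the generic marginal decay $\beta(\mu)\zeta e^{-\eta_*(\mu)\zeta}$, $\beta(\mu)\neq 0$.

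\emph{Spectral stability and conclusion.} It then remains to check the hypotheses of the marginal stability theorems of \cite{CAMS,AverySystems}: (i) a simple pinched double root at the origin for the linearization at the invaded state in the critically weighted space, and (ii) the generic marginal decay of the front --- both established above --- together with (iii) absence of point spectrum of the linearization about $\q_*(\cdot;\mu)$, in the critically weighted space, in $\{\Re\lambda\ge 0\}$ apart from a simple resonance at $\lambda=0$. For (iii) I would slave the $b$-component of the eigenfunction: for $\Re\lambda\ge 0$ the stable block of the linearization is invertible uniformly in small $\mu$, so the eigenvalue problem reduces to a scalar Schr\"odinger-type problem converging as $\mu\downarrow 0$ to that of the Fisher--KPP front in the weighted space, which has no spectrum in $\{\Re\lambda\ge0\}$ beyond the resonance at the origin; a $\mu$-uniform Fredholm/perturbation argument, together with an a priori bound excluding eigenvalues of large modulus, transfers this to small $\mu>0$. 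With (i)--(iii) in hand, $\q_*(\cdot;\mu)$ is a marginally spectrally stable pulled front and \cite{CAMS,AverySystems} imply it governs invasion of the unstable state by localized perturbations, proving Theorem~\ref{t: main}. I expect step (iii) to be the principal obstacle: the reduction to the scalar problem is singular in $\mu$, so regular perturbation from $\mu=0$ may miss eigenvalues entering from large $|\lambda|$, and the neutral/stable coupling must be controlled quantitatively and uniformly in $\mu$ across the critically weighted space; a secondary difficulty is showing the far-field/core scheme selects exactly the marginal speed $c_*(\mu)=c_{\mathrm{lin}}(\mu)$ uniformly as $\mu\downarrow 0$.
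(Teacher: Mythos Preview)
Your outline matches the paper's strategy closely: rescale to a Fisher--KPP leading-order equation, construct the front via a far-field/core ansatz with an implicit function theorem, and verify Hypotheses~\ref{hyp: PDR}--\ref{hyp: no unstable point spectrum} by reducing the eigenvalue problem to a perturbation of the scalar KPP one. Two points deserve sharpening.

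First, your use of $c$ as the bordering parameter is awkward. The leading-edge ansatz $\zeta e^{-\eta_*\zeta}$ only solves the far-field linear equation when $c=c_*$ exactly; for $c\neq c_*$ the residual from $\chi_+\q_+$ decays only like $e^{-\eta_*\zeta}$ and falls outside the target space $L^2_{0,\eta}$ with $\eta>\eta_*$, so $F(w,c,\mu)$ is not well-defined as a map into that space on an open $c$-interval. The paper instead reads off $c_*$ (which is exactly $2$ in the scaled variables, not $2+O(\mu)$) directly from the dispersion relation, fixes it, and borders with the tail coefficient $a$ in $\chi_+(a+\zeta)e^{-\zeta}$; varying $a$ preserves exponential localization of the residual and gives the needed Fredholm index~$0$ problem.

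Second, ``legitimate since $A(\mu)$ is invertible'' understates what is needed to slave the stable block. After rescaling, the $V$-equation is governed by $\mu D\partial_{yy}-K$, and invertibility of $K$ alone does not give uniform-in-$\mu$ bounds on its inverse: one needs $-Dk^2-K$ invertible for all $k\in\R$ (the no-Turing condition~\eqref{e: no turing}), and the paper then proves uniform bounds for the preconditioner $(\delta^2 D\partial_{yy}-K)^{-1}$ on the relevant exponentially weighted spaces (Lemmas~\ref{l: preconditioner regularity} and the preceding one). This is exactly the ``quantitative, uniform control'' you flag as the principal obstacle, and it is what makes both the existence and the eigenvalue reductions genuinely continuous at $\delta=0$.
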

	The present work is in part motivated by \cite{RaugelKirchgassner}, which considers dynamics of \eqref{e: RD} near a transcritical bifurcation, and establishes existence and some stability properties of fronts, though not selection from localized initial data. 
	
	The remainder of this paper is organized as follows. In Section \ref{s: marginal stability}, we explain how marginal stability leads to front selection from localized initial data by the results of \cite{CAMS, AverySystems}. In Sections \ref{s: transcrit}, \ref{s: saddle node}, and \ref{s: pitchfork}, we precisely formulate and prove Theorem \ref{t: main} for transcritical, saddle-node, and supercritical pitchfork bifurcations, respectively. The proof is quite similar in all three cases, so we give the full details in the transcritical case and explain the necessary modifications for the other cases. 
	
	\section{Front selection through marginal stability}\label{s: marginal stability} 
	Typically traveling front solutions $\u(x,t) = \q(x-ct;c)$ connecting to an unstable state at $x = + \infty$ exist for an open range of speeds $c$. Many of these fronts may be stable against perturbations which do not alter the tail decay of the traveling wave profile. However, according to the marginal stability conjecture, only those which are marginally spectrally stable attract initial conditions which vanish identically for $x$ sufficiently large. These initial conditions are naturally considered as a model for the propagation of compactly supported disturbances to the unstable state in \eqref{e: RD}, and so are the most relevant for applications. We refer to initial data which vanish for $x$ sufficiently large, or are otherwise very rapidly decaying, as \emph{steep}. 
	
	We now formulate precise assumptions which capture marginal spectral stability, and guarantee this attraction of steep initial data by the results of \cite{CAMS, AverySystems}. In this section, we will consider the parameter $\mu$ to be fixed, and write $f(\u;\mu) = f(\u)$. The speed of a pulled front may be predicted from the linearization about the unstable state in the leading edge, which we will take here to be $u \equiv 0$, so we consider the linearization
	\begin{align}
		\u_t = D \u_{xx} + c \u_x + f'(0) \u, \label{e: lin about 0}
	\end{align}
	in a moving frame with speed $c$. Dynamics of \eqref{e: lin about 0} may be analyzed through the \emph{dispersion relation}, 
	\begin{align}
		d_c (\lambda, \nu) = \det (D \nu^2 + c \nu I + f'(0) - \lambda I), \label{e: dispersion}
	\end{align}
	obtained via the Fourier-Laplace ansatz $\u(x,t) \sim e^{\nu x + \lambda t}$. In particular, if $\u$ is a solution to \eqref{e: lin about 0} with compactly supported initial data $\u_0$, then for each fixed $L > 0$ and $\eps > 0$, one has the pointwise growth bound \cite{HolzerScheelPointwiseGrowth}
	\begin{align}
		\sup_{x \in [-L, L]} |\u(x,t)| \leq C e^{(\Re \lambda_* + \eps )t}, \label{e: pointwise growth bound}
	\end{align}
	where $(\lambda_*, \nu_*)$ is the \emph{pinched double root} of the dispersion relation \eqref{e: dispersion} for which $\lambda_*$ has the largest real part. Pinched double roots are double roots in $\nu$, and so satisfy $d_c(\lambda_*, \nu_*) = \partial_\nu d_c (\lambda_*, \nu_*) = 0$. The term ``pinched'' refers to the fact that the continuations $\nu_\pm(\lambda)$ of the double root satisfy $\Re \nu_+(\lambda) \to \infty$ as $\Re \lambda \to \infty$, while $\Re \nu_-(\lambda) \to -\infty$ as $\Re \lambda \to \infty$. The pointwise growth bound can be proved using the inverse Laplace transform, and we refer to \cite{HolzerScheelPointwiseGrowth} for details. There are also earlier perspectives relying on the Fourier transform; see for instance \cite{vanSaarloosReview}. 
	
	In particular, \eqref{e: pointwise growth bound} suggests that marginal pointwise stability in the leading edge of the front is captured by marginal stability of pinched double roots, which we capture in the following hypothesis.
	\begin{hyp}[Linear spreading speed via pinched double root]\label{hyp: PDR}
		Assume that there exists $c_* > 0$ and $\nu_* < 0$ such that the dispersion relation $d_{c_*}(\lambda, \nu)$ with $c = c_*$ satisfies the following.  
		\begin{enumerate}[i)]
			\item (Simple pinched double root at the origin) For $\lambda, \tilde{\nu}$ near 0, we have the expansion
			\begin{align}
				d_{c_*}(\lambda, \nu_*+\tilde{\nu}) = d_{10} \lambda + d_{02} \tilde{\nu}^2 + \mathrm{O}(\lambda \tilde{\nu}, \tilde{\nu}^3, \lambda^2),
			\end{align}
			where $d_{10}, d_{02} \in \R$ satisfy $d_{10} d_{02} < 0$. 
			\item (Minimal marginal spectrum) If $d_{c_*} (i \omega, \nu_*+ ik) = 0$ for some $\omega, k \in \R$, then $\omega = k = 0$. 
			\item (No unstable spectrum) There are no solutions to $d_{c_*}(\lambda, \nu_*+ik) = 0$ with $k \in \R$ and $\Re \lambda > 0$. 
		\end{enumerate}
	\end{hyp}
	Condition i) of Hypothesis \ref{hyp: PDR} guarantees that the dispersion relation has a pinched double root at the origin. Together, conditions ii) and iii) imply that there are no other marginally stable or unstable pinched double roots. (Actually, they imply the slightly stronger condition that the essential spectrum of $\u \equiv 0$ is marginally stable in an exponentially weighted function space; see \cite{FayeHolzerScheelResonant} for an exploration of related subtleties.) We refer to $c_*$ as \emph{the linear spreading speed}. See \cite{HolzerScheelPointwiseGrowth} for further background on linear spreading speeds and pinched double roots. 
	
	To establish $c_*$ as the selected speed in the invasion process, we want to have a traveling front with speed $c_*$ which is marginally spectrally stable, which is captured in the next hypotheses. 
	\begin{hyp}[Existence of a critical front]\label{hyp: front existence}
		Assume that \eqref{e: RD} admits a traveling wave solution $\u(x,t) = \q_*(x-c_*t)$ satisfying
		\begin{align*}
			\lim_{\xi \to -\infty} \q_*(\xi) = \u_-, \quad \lim_{\xi \to \infty} \q_*(\xi) = 0
		\end{align*}
		for some selected state $\u_- \in \R^n$. Moreover, we assume that convergence to $\u_-$ is exponential as $\xi \to -\infty$, and as $\xi \to \infty$ we have the generic asymptotics
		\begin{align}
			\q_*(\xi) = [b (\u^0 \xi + \u^1) + a \u^0] e^{\nu_* \xi} + \mathrm{O}\left( e^{(\nu_*-\eta) \xi}\right), \label{e: front asymptotics}
		\end{align}
		for some $a,b \in \R$, $b \neq 0$ $\u^0, \u^1 \in \R^n$, and some $\eta > 0$. 
	\end{hyp}
	\begin{remark}
		Hypothesis \ref{hyp: PDR} implies that the linearization of the traveling wave formulation of \eqref{e: RD} with $c = c_*$ has a 2-by-2 Jordan block at the origin \cite{HolzerScheelPointwiseGrowth}. Hence, the asymptotics \eqref{e: front asymptotics} are generic under Hypothesis \ref{hyp: PDR}.  
	\end{remark}
	We want this critical front $\q_*$ to be marginally spectrally stable. The essential spectrum associated to dynamics near $\xi = + \infty$ will be marginally stable in an appropriate weighted space by Hypothesis \ref{hyp: PDR}. The left dispersion relation
	\begin{align*}
		d^-(\lambda, \nu) = \det (D \nu^2 + c_* \nu I + f'(\u_-) - \lambda I)
	\end{align*}
	determines the spectrum $\Sigma^-$ of the linearization about $\u_-$, in the moving frame with speed $c_*$, with
	\begin{align*}
		\Sigma^- = \{ \lambda \in \C : d^-(\lambda, ik) = 0 \text{ for some } k \in \R \}. 
	\end{align*}
	\begin{hyp}[Stability on the left]\label{hyp: left stability}
		Assume that $\Re (\Sigma^-) < 0$. 
	\end{hyp}
	Finally, we need to exclude unstable point spectrum. First, we define the exponential weight which (marginally) stabilizes the essential spectrum in the leading edge. Let $\eta_* = - \nu_*> 0$, and let $\omega_*: \R \to \R$ be a smooth positive weight function satisfying
	\begin{align*}
		\omega_*(\xi) = \begin{cases}
			1, & \xi \leq -1, \\
			e^{\eta_* \xi}, & \xi \geq 1. 
		\end{cases}
	\end{align*}
	Let $\mathcal{A}$ denote the linearization of \eqref{e: RD}, in the moving frame with speed $c_*$, about the front $\q_*$:
	\begin{align*}
		\mathcal{A} = D \partial_\xi^2 + c_* \partial_\xi + f'(\q_*). 
	\end{align*}
	Then, define the weighted linearization $\mcl$ through
	\begin{align*}
		\mcl u = \omega_* \mathcal{A} \left( \frac{\u}{\omega_*} \right). 
	\end{align*}
	The spectrum of $\mcl$ on $L^2(\R)$ is the same as the spectrum of $\mathcal{A}$ on the weighted space with norm $\| g \| = \| \omega_* \g \|_{L^2}$. It follows from Palmer's theorem \cite{Palmer1, Palmer2} and Hypotheses \ref{hyp: PDR} and \ref{hyp: left stability} that the essential spectrum of $\mcl$ is marginally stable. We exclude unstable point spectrum of $\mcl$ in the following hypothesis. 
	\begin{hyp}[No unstable point spectrum]\label{hyp: no unstable point spectrum}
		Assume that $\mcl : H^2 (\R) \subset L^2(\R) \to L^2(\R)$ has eigenvalues with $\Re \lambda \geq 0$. Moreover, we assume that there is no bounded solution to the equation $\mcl \u = 0$. 
	\end{hyp}
	If there were a bounded solution to $\mcl \u = 0$, this would signify that we are not fully in the pulled propagation regime, but at the transition point between pushed and pulled propagation; see \cite{AveryHolzerScheel} for further details. 
	
	To characterize propagation near the bifurcations studied here, we rely on the main result of \cite{AverySystems}, which establishes front selection from steep initial data under Hypotheses \ref{hyp: PDR} through \ref{hyp: no unstable point spectrum} as follows. First, given a weight parameter $r \in \R$, define a smooth positive algebraic weight $\rho_{0, r}$ satisfying
	\begin{align*}
		\rho_{0,r}(x) = \begin{cases}
			1, & x \leq -1, \\
			x^r, &x \geq 1. 
		\end{cases}
	\end{align*}
	\begin{thmlocal}[{\cite{AverySystems}}]\label{t: selection}
		Assume Hypotheses \ref{hyp: PDR} through \ref{hyp: no unstable point spectrum} hold. Fix $2 < r < \frac{17}{8}$. The critical front $\q_*$ is \emph{selected} in the sense of \cite[Definition 1]{CAMS}. More precisely, for each $\eps > 0$, there exists a set of initial data $\mathcal{U}_\eps \subset L^\infty (\R)$ such that the following hold. 
		\begin{enumerate}
			\item For each $\u_0 \in \mathcal{U}_\eps$, the solution $\u$ to \eqref{e: RD} with initial data $\u_0$ satisfies 
			\begin{align}
				\sup_{x \in \R} | \rho_{0,-1} (x) \omega_*(x) [\u(x+\sigma(t), t) - \q_*(x)]| < \eps,
			\end{align}
			for all $t \geq t_* (\u_0)$, sufficiently large, where 
			\begin{align}
				\sigma(t) = c_* t - \frac{3}{2 \eta_*} \log t + x_\infty (\u_0)
			\end{align}
			for some $x_\infty(\u_0) \in \R$. 
			\item $\mathcal{U}_\eps$ contains some steep initial data. More precisely, there exists $\u_0 \in \mathcal{U}_\eps$ such that $\u_0(x) \equiv 0$ for $x$ sufficiently large. 
			\item $\mathcal{U}_\eps$ is open in the topology induced by the norm $\| \g \| = \| \rho_{0,r} \omega_* \g \|_{L^\infty}$.  
		\end{enumerate}
	\end{thmlocal}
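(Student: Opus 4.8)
Since this statement is quoted directly from \cite{AverySystems}, one may simply invoke that reference; for orientation I outline how the argument goes. The plan is to recast selection as a nonlinear stability problem for $\q_*$ in a weighted topology that is fine enough to detect the marginal modes yet coarse enough that the linearized semigroup decays. Writing a solution of \eqref{e: RD} in the comoving frame as $\u(\xi+\sigma(t),t)=\q_*(\xi)+\mathbf{v}(\xi,t)$ with an a priori unknown shift $\sigma(t)$ gives
\[
	\mathbf{v}_t=\mathcal{A}\mathbf{v}+(\dot\sigma-c_*)\bigl(\partial_\xi\q_*+\partial_\xi\mathbf{v}\bigr)+\mathcal{N}(\mathbf{v}),
\]
with $\mathcal{A}$ the linearization about $\q_*$ and $\mathcal{N}$ collecting quadratic and higher terms. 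Conjugating by $\omega_*\rho_{0,r}$ with $2<r<\tfrac{17}{8}$ yields an equation for $w=\omega_*\rho_{0,r}\mathbf{v}$ driven by $\mcl$, whose essential spectrum is marginally stable by Hypotheses \ref{hyp: PDR} and \ref{hyp: left stability}, which has no point spectrum in $\{\Re\lambda\ge 0\}$ by Hypothesis \ref{hyp: no unstable point spectrum}, and which carries a single algebraically simple neutral direction (after conjugation, the image of $\partial_\xi\q_*$) that one quotients out.

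\textbf{Linear estimates.} The crux is a pointwise decay bound for $e^{\mcl t}$. Condition i) of Hypothesis \ref{hyp: PDR} forces the dispersion relation to vanish quadratically in $\nu$ at the marginal point, so the resolvent of $\mcl$ has a square-root branch point at $\lambda=0$, and the $2\times2$ Jordan block there (cf. the remark after Hypothesis \ref{hyp: front existence}) contributes a pole-type term. Deforming the inverse-Laplace contour around $\lambda=0$ and tracking the spatial structure of the resolvent kernel through a far-field/core decomposition (exponential dichotomies, as in \cite{AveryHolzerScheel}) gives diffusive decay: on the complement of the neutral subspace, $e^{\mcl t}$ decays like $t^{-3/2}$ in the $\omega_*\rho_{0,r}$-weighted norm, with only a bounded contribution along the neutral direction.

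\textbf{Nonlinear iteration and the shift.} I would fix $\sigma(t)$ by demanding that the projection of the right-hand side onto the neutral mode vanish, keeping $w(\cdot,t)$ in the decaying subspace. The slowly decaying feedback of the neutral mode into the leading edge then forces $\dot\sigma(t)-c_*\sim-\tfrac{3}{2\eta_* t}$, which integrates to the Bramson delay $\sigma(t)=c_*t-\tfrac{3}{2\eta_*}\log t+x_\infty$. With this choice the forcing $(\dot\sigma-c_*)\partial_\xi\q_*$ is $\mathrm{O}(t^{-1})$ along a decaying direction, and the quadratic nonlinearity --- in particular the marginal contribution from the $\xi e^{\nu_*\xi}$ tail in \eqref{e: front asymptotics} --- becomes integrable against the $t^{-3/2}$ rate precisely when $r>2$, while $r<\tfrac{17}{8}$ is what keeps the linear estimate and the nonlinear self-interactions mutually consistent. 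A contraction/bootstrap argument in a space of functions with prescribed algebraic decay in $t$ then closes and delivers item 1 with the stated rate.

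\textbf{Admissible data and the main obstacle.} Openness of $\mathcal{U}_\eps$ in the $\rho_{0,r}\omega_*$-weighted $L^\infty$ topology (item 3) follows from continuous dependence together with the contraction estimate, and to see that $\mathcal{U}_\eps$ contains steep data (item 2) I would run a short initial-transient analysis: refining the pointwise spreading bound \eqref{e: pointwise growth bound} to track the spatial profile shows that compactly supported data develops, after finite time, a leading edge $\sim\xi e^{\nu_*\xi}$ compatible with \eqref{e: front asymptotics}, hence enters the weighted ball about $\q_*$ on which the nonlinear argument applies. The hard part throughout is the sharp linear pointwise estimate at the marginal point: since the essential spectrum only touches the imaginary axis, Gearhart--Pr\"uss-type arguments give nothing, and the $t^{-3/2}$ rate must be extracted by hand from the algebraic branch point of the resolvent while simultaneously controlling the spatial decay encoded in both weights --- and making the resonant quadratic term from the front's polynomial-times-exponential tail summable against that rate is exactly what pins down the admissible window $2<r<\tfrac{17}{8}$.
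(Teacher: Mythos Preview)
The paper does not prove this theorem; it is quoted verbatim from \cite{AverySystems} and simply invoked as a black box. You correctly recognize this in your first sentence, and that alone is the ``proof'' the paper gives. Your subsequent sketch of how the argument in \cite{AverySystems} runs --- conjugation into the weighted space, extraction of $t^{-3/2}$ decay from the square-root branch point of the resolvent at $\lambda=0$, modulation of the shift to project out the neutral mode and recover the Bramson correction, and closure of a nonlinear iteration in the window $2<r<\tfrac{17}{8}$ --- is a faithful high-level summary of that reference and is not something the present paper attempts, so there is nothing further to compare.
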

	Thus, to prove Theorem \ref{t: main}, our goal is to show that Hypotheses \ref{hyp: PDR} through \ref{hyp: no unstable point spectrum} are satisfied when the reaction kinetics $f(\u; \mu)$ undergo a transcritical, saddle-node, or supercritical pitchfork bifurcation. 
	
	A key tool will be Fredholm properties of $\mcl$ on exponentially weighted spaces, implied by Hypotheses \ref{hyp: PDR} and \ref{hyp: left stability}. Given rates $\eta_{\pm} \in \R$, we define a smooth, positive two-sided exponential weight $\omega_{\eta_-, \eta_+}$ satisfying
	\begin{align}
		\omega_{\eta_-, \eta_+} (\xi) = \begin{cases}
			e^{\eta_- \xi}, & \xi \leq -1, \\
			e^{\eta_+ \xi}, & \xi \geq 1. 
		\end{cases}
	\end{align}
	Given a non-negative integer $k$, we define the exponentially weighted Sobolev space $H^k_{\eta_-, \eta_+} (\R, \C^n)$ through the norm 
	\begin{align}
		\| \g \|_{H^k_{\eta_-, \eta_+}} = \| \omega_{\eta_-, \eta_+} \g \|_{H^k}. 
	\end{align}
	When $k = 0$, we write $H^0_{\eta_-,\eta_+} = L^2_{\eta_-,\eta_+}$. 
	
	We will repeatedly use the following Fredholm properties of $\mcl$, which follow from Hypotheses \ref{hyp: PDR} and \ref{hyp: left stability} together with Palmer's theorem relating Fredholm properties to exponential dichotomies and asymptotic Morse indices \cite{Palmer1, Palmer2}. 
	\begin{lemma}\label{l: fredholm properties}
		Assume Hypotheses \ref{hyp: PDR} through \ref{hyp: left stability} hold. Fix $\eta> 0$ sufficiently small, and consider $\mcl$ as an operator $\mcl : H^2_{0, \eta} \to L^2_{0, \eta}$. Then $\mcl$ is a Fredholm operator with index -1. 
	\end{lemma}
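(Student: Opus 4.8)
The plan is to recognize $\mcl$ as an asymptotically autonomous differential operator, apply Palmer's theorem to conclude it is Fredholm, and then reduce the index to a count of Morse indices at $\xi = \pm\infty$. First I would conjugate by the weight, replacing $\mcl : H^2_{0,\eta} \to L^2_{0,\eta}$ by $\hat{\mcl} := \omega_{0,\eta}\, \mcl\, \omega_{0,\eta}^{-1}$ acting as $H^2(\R,\C^n) \subset L^2(\R,\C^n) \to L^2(\R,\C^n)$, which has the same Fredholm index. Rewriting $\hat{\mcl} \u = \g$ as a first-order system $Y_\xi = \mathcal{M}(\xi) Y + G$ with $Y = (\u, \u_\xi) \in \C^{2n}$, the coefficient $\mathcal{M}(\xi)$ is bounded and converges to limits $\mathcal{M}_\pm$ as $\xi \to \pm\infty$: using that $\q_*(\xi) \to \u_-$ exponentially and $\q_*(\xi) \to 0$ with the asymptotics \eqref{e: front asymptotics}, one gets $\mathcal{M}(\xi) - \mathcal{M}_\pm \to 0$ exponentially, where the characteristic equation of $\mathcal{M}_-$ is $d^-(0,\nu) = 0$ and that of $\mathcal{M}_+$ is $d_{c_*}(0, \nu_* + \nu - \eta) = 0$ --- the shift by $\nu_* = -\eta_*$ coming from the weight $\omega_*$ built into $\mcl$, and the further shift by $-\eta$ from conjugation by $\omega_{0,\eta}$.

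Next I would verify hyperbolicity of $\mathcal{M}_\pm$, so that Palmer's theorem \cite{Palmer1, Palmer2} applies and yields that $\hat{\mcl}$ is Fredholm with $\mathrm{ind}(\mcl) = \dim E^u(\mathcal{M}_-) - \dim E^u(\mathcal{M}_+)$, where $E^u$ denotes the unstable eigenspace of the constant-coefficient matrix. Hyperbolicity of $\mathcal{M}_-$ is immediate from Hypothesis \ref{hyp: left stability}: $\Re(\Sigma^-) < 0$ forces $d^-(0, ik) \neq 0$ for all $k \in \R$, so $\mathcal{M}_-$ has no eigenvalues on $i\R$. For $\mathcal{M}_+$, Hypothesis \ref{hyp: PDR}(i) shows that $d_{c_*}(0, \nu_* + \tilde\nu)$ vanishes to order exactly two at $\tilde\nu = 0$ (as $d_{02} \neq 0$ with vanishing linear coefficient), so the unweighted asymptotic operator has a double spatial eigenvalue on $i\R$; conjugating by $\omega_{0,\eta}$ moves this root to $\tilde\nu = \eta > 0$, and Hypotheses \ref{hyp: PDR}(ii)--(iii) guarantee that no other root of $d_{c_*}(\lambda, \nu_* + \cdot)$ lies on $i\R$ for $\lambda$ in a neighborhood of $[0,\infty)$, so for all sufficiently small $\eta > 0$ the matrix $\mathcal{M}_+$ is hyperbolic.

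Finally I would carry out the Morse index count. Since $D$ is positive definite, for $\Re\lambda \gg 1$ both $d^-(\lambda,\cdot)$ and $d_{c_*}(\lambda, \nu_* + \cdot)$ have exactly $n$ roots in each open half-plane. For $\mathcal{M}_-$: because $\Sigma^- \subset \{\Re\lambda < 0\}$, no root crosses $i\R$ as $\lambda$ decreases along the positive real axis to $0$, so $\dim E^u(\mathcal{M}_-) = n$. For $\mathcal{M}_+$: by Hypothesis \ref{hyp: PDR}(ii)--(iii) the only crossing on $(0,\infty)$ occurs at $\lambda = 0$, where the pinched double root splits --- by the pinching condition in Hypothesis \ref{hyp: PDR}(i) --- into one branch in each half-plane; hence at $\lambda = 0$ the remaining $2n-2$ spatial eigenvalues split as $n-1$ with $\Re\nu < 0$ and $n-1$ with $\Re\nu > 0$, all bounded away from $i\R$. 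Conjugating by $\omega_{0,\eta}$ then moves the double root from the origin into the open right half-plane while (for $\eta$ small) leaving the signs of the other $2n-2$ eigenvalues unchanged, so $\dim E^u(\mathcal{M}_+) = (n-1) + 2 = n+1$. Therefore $\mathrm{ind}(\mcl) = n - (n+1) = -1$.

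The step I expect to be the main obstacle is the Morse-index bookkeeping at $\xi = +\infty$: one must combine the pinching structure of Hypothesis \ref{hyp: PDR}(i) with the minimality conditions (ii)--(iii) to argue simultaneously that an arbitrarily small weight $e^{\eta\xi}$ renders $\mathcal{M}_+$ hyperbolic and that the double root then contributes exactly $+2$ (rather than $+1$ or $0$) to $\dim E^u(\mathcal{M}_+)$. The remaining ingredients --- conjugation invariance of the index, Palmer's theorem, and continuity of the spectral projections in $\lambda$ --- are standard.
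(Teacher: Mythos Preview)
Your proposal is correct and follows exactly the route the paper indicates: the paper does not spell out a proof but simply records that the lemma is a consequence of Hypotheses~\ref{hyp: PDR} and~\ref{hyp: left stability} together with Palmer's theorem, and your outline fills in precisely this standard argument via conjugation by the weight, reduction to a first-order system, and the asymptotic Morse-index count. Your bookkeeping at $\xi = +\infty$ is also correct --- the sign condition $d_{10} d_{02} < 0$ in Hypothesis~\ref{hyp: PDR}(i) forces the double root to pinch locally, and combined with (ii)--(iii) this gives exactly the $(n-1)+2$ unstable count after the small exponential shift.
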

	
	\section{Transcritical bifurcation}\label{s: transcrit}
	Consider \eqref{e: RD} near a transcritical bifurcation, with normal form 
	\begin{align}
		u_t &= u_{xx} + \mu u - u^2 + f_0 (u, v; \mu) \nonumber\\
		v_t &= D v_{xx} - K v + f_1 (u, v; \mu), \label{e: transcritical 2}
	\end{align}
	for $(u, v) \in \R \times \R^{n-1}$ with small parameter $\mu > 0$. We assume that $D, K \in \R^{n-1 \times n-1}$ each have strictly positive eigenvalues. We assume that $f_0$ and $f_1$ are smooth, with
	\begin{align}
		f_0 (u, v; \mu) &= \mathrm{O}\left(\mu^2 u^2, u |v|, |v|^2, u^3 \right), \nonumber \\
		f_1 (u, v; \mu) &= \mathrm{O}\left(\mu |v|, u^2, |v|^2, u|v| \right) \label{e: transcritical nonlinearity expansions 2}
	\end{align}
	as $\mu, u, |v| \to 0$. We further assume that 
	\begin{align}
		\det (- D k^2 - K- \lambda I) \neq 0 \text{ for any } k \in \R, \lambda \in \C \text{ with } \Re \lambda \geq 0.  \label{e: no turing}
	\end{align}
	This assumption ensures that the $v$ component does not undergo any Turing-type bifurcation, which would introduce a secondary instability. Note that in particular that this, together with separate invertibility of $D$ and $K$, implies that there is a constant $C > 0$ such that
	\begin{align}
		\sup_{k \in \R} |(-Dk^2 - K)^{-1} | \leq C, \label{e: transcritical uniform bound symbol}
	\end{align}
	where $|\cdot|$ is some fixed matrix norm. 
	
	In \cite{RaugelKirchgassner}, it was shown via a center manifold reduction that systems of this type (with $D = I$) admit critical pulled front solutions, and using energy estimates the authors showed that these pulled fronts are nonlinearly stable in certain weighted spaces. Here, we further show that these fronts attract open classes of steep initial data by showing that Hypotheses \ref{hyp: PDR}-\ref{hyp: no unstable point spectrum} are automatically satisfied near the transcritical bifurcation. Since existence of critical fronts, with weak exponential decay \eqref{e: front asymptotics} was already shown in \cite{RaugelKirchgassner}, the main contribution here is to verify marginal spectral stability of these fronts. This was not needed for the nonlinear stability argument in \cite{RaugelKirchgassner}, which relied on energy estimates, but here will imply selection of pulled fronts from steep initial data by Theorem \ref{t: selection}. 
	
	We give a unified approach to existence and spectral stability of these fronts following that of \cite{AveryGarenaux}, which established existence and marginal spectral stability of pulled fronts in the extended Fisher-KPP equation. As in \cite{RaugelKirchgassner}, we first introduce the rescaled variables
	\begin{align}
		y = \sqrt{\mu} x, \quad \tau = \mu t, \quad U(y, \tau) = \mu^{-1} u(x,t), \quad V(y, \tau) = \mu^{-1} v(x,t). 
	\end{align}
	The new unknowns $U$ and $V$ then solve the system
	\begin{align}
		U_\tau &= U_{yy} + U - U^2 + g_0 (U, V; \mu), \nonumber\\
		\mu V_\tau &= \mu D V_{yy} - K V +  \mu g_1 (U, V; \mu), \label{e: transcritical scaled 2}
	\end{align}
	where
	\begin{align}
		g_0 (U, V; \mu) := \frac{1}{\mu^2} f_0 (\mu U, \mu V; \mu), \quad g_1 (\mu) = \frac{1}{\mu^2} f_1 (\mu U, \mu V; \mu)
	\end{align}
	are smooth in all arguments by \eqref{e: transcritical nonlinearity expansions 2}. 
	
	\begin{thmlocal}\label{t: transcritical}
		For $\mu > 0$ sufficiently small, the system \eqref{e: transcritical scaled 2} satisfies Hypotheses \ref{hyp: PDR} through \ref{hyp: no unstable point spectrum}. 
	\end{thmlocal}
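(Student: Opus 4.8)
The plan is to regard the rescaled system \eqref{e: transcritical scaled 2} as a perturbation, in $\mu$, of the scalar Fisher--KPP equation $U_\tau = U_{yy} + U - U^2$, which is recovered by formally setting $\mu = 0$: the second equation then collapses to the algebraic relation $KV = 0$, forcing $V \equiv 0$, and the first becomes Fisher--KPP. Since Hypotheses \ref{hyp: PDR}--\ref{hyp: no unstable point spectrum} are all classical for the critical Fisher--KPP front $q_{\mathrm{FKPP}}$ at speed $c_* = 2$ and decay rate $\nu_* = -1$, the task reduces to showing that each survives this singular perturbation, uniformly as $\mu \to 0^+$; we follow the unified functional-analytic treatment of front existence and spectral stability for the extended Fisher--KPP equation in \cite{AveryGarenaux}, which is itself a singular perturbation of Fisher--KPP. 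For Hypothesis \ref{hyp: PDR}, the expansions \eqref{e: transcritical nonlinearity expansions 2} show that $g_0, g_1$ vanish to quadratic order at the origin and that the $U$-dependence of $g_1$ begins at order $U^2$, so the linearization of \eqref{e: transcritical scaled 2} about $(U,V) = 0$ is block triangular and the dispersion relation factors as $d_c(\lambda, \nu) = (\nu^2 + c\nu + 1 - \lambda)\, d_c^V(\lambda, \nu; \mu)$. The first factor is exactly the Fisher--KPP dispersion relation: at $(c_*, \nu_*) = (2, -1)$ it equals $\tilde\nu^2 - \lambda$ in the variable $\nu = -1 + \tilde\nu$, so $d_{10} = -1$, $d_{02} = 1$ satisfy $d_{10}d_{02} < 0$, and $-k^2 - \lambda \neq 0$ for $k \in \R$, $\Re\lambda \geq 0$ away from the origin. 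For the second factor, multiplying by $\mu^{n-1}$ and setting $\nu = -1 + ik$ yields $\det\!\big(-(K + \mu k^2 D) - \mu\lambda I + \mathrm{O}(\mu(1+|k|))\big)$, which after the rescaling $\tilde k = \sqrt{\mu}\, k$ is a small perturbation of $\det(-(K + \tilde k^2 D))$; since \eqref{e: no turing} yields, by a routine compactness-and-asymptotics argument, a uniform gap $\Re\lambda \le -\delta < 0$ for every $\lambda$ in the spectrum of $-Dk^2 - K$ and every $k \in \R$, this rules out zeros of $d_{c_*}^V$ with $\nu = -1 + ik$ and $\Re\lambda \geq 0$ once $\mu$ is small. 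Hence Hypothesis \ref{hyp: PDR} holds with $c_* = 2$, $\nu_* = -1$ for all small $\mu > 0$.

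For the existence of a critical front (Hypothesis \ref{hyp: front existence}), the key structural point is that the $V$-component is slaved to $U$: by the uniform invertibility \eqref{e: transcritical uniform bound symbol} of the symbol $-Dk^2 - K$, the traveling-wave equation $\mu D V'' + c_*\mu V' - KV + \mu g_1(U,V;\mu) = 0$ can be solved for $V = \mu\,\Phi_\mu(U)$ with $\Phi_\mu$ smooth and uniformly bounded on the exponentially weighted Sobolev spaces of Lemma \ref{l: fredholm properties}; since the leading source term $g_1(U,0;0)$ is cancelled against $K V_1$ in the expansion $V = \mu V_1 + \mathrm{O}(\mu^2)$, no boundary layer forms and $V$ stays $\mathrm{O}(\mu)$ on the $\mathrm{O}(1)$ spatial scale. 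Substituting back reduces the existence problem at speed $c_* = 2$ to a single equation for $U$ that is an $\mathrm{O}(\mu)$ perturbation of the Fisher--KPP traveling-wave equation; one then runs the far-field/core decomposition of \cite{AveryGarenaux}, writing $U$ as a core part plus a far-field part carrying the resonant tail $(a + b\xi)e^{-\xi}$, adjoining the constants $a, b$ as unknowns to absorb the Fredholm deficiency of Lemma \ref{l: fredholm properties}, and applying the implicit function theorem to obtain the perturbed front $\q_*^\mu = (U^\mu, \mu\Phi_\mu(U^\mu))$, with left state $\u_-^\mu = (1, 0) + \mathrm{O}(\mu)$ and tail coefficient $b = b(\mu) \neq 0$ inherited from $q_{\mathrm{FKPP}}$. (Existence of such a front was, in essence, already established in \cite{RaugelKirchgassner} by a center-manifold reduction.) Since the $V$-component of $\q_*^\mu$ is forced by a source of order $\xi^2 e^{-2\xi}$ and therefore decays faster than $e^{\nu_*\xi}$, the asymptotics \eqref{e: front asymptotics} hold with $\u^0$ pointing in the $U$-direction.

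For Hypothesis \ref{hyp: no unstable point spectrum}, the same slaving --- now applied to the eigenvalue problem $\mcl \u = \lambda \u$, whose $V$-component is solved for $\tilde V = \mu\,\mathcal{T}_\mu(\lambda)\tilde U$ uniformly for $\lambda$ in any fixed bounded subset of $\{\Re\lambda \geq 0\}$ --- reduces it to an $\mathrm{O}(\mu)$ perturbation of the weighted Fisher--KPP eigenvalue problem, which has no eigenvalue with $\Re\lambda \geq 0$ and, $q_{\mathrm{FKPP}}$ being strictly pulled, no bounded solution at $\lambda = 0$. As $\mcl$ is sectorial, its spectrum in $\{\Re\lambda \geq 0\}$ is bounded, so on that region the Lyapunov--Schmidt / bordered-Fredholm argument of \cite{AveryGarenaux} transfers and excludes both unstable point spectrum and a neutral bounded mode. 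Hypothesis \ref{hyp: left stability} is then immediate from the analogous factorization of the left dispersion relation: its $U$-factor is $-k^2 - 1 - \lambda$ (so $\Re\lambda = -1 - k^2 < 0$ on the imaginary $\nu$-axis), and its $V$-factor is controlled exactly as in the first paragraph.

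The step I expect to be the main obstacle is the $\mu$-uniform exclusion of point spectrum near $\lambda = 0$: there the essential spectrum of $\mcl$ is only marginally stable, tangent to $i\R$ at the origin, so controlling eigenvalues that might bifurcate from the pinched double root under the singular perturbation requires the delicate bordered-Fredholm construction of \cite{AveryGarenaux}, carried out with constants uniform in $\mu$, and one must check that the slaving reduction $\tilde V = \mu\,\mathcal{T}_\mu(\lambda)\tilde U$ remains well behaved --- with uniformly bounded inverse on precisely the algebraically weighted spaces used there --- as $\lambda \to 0$ and $\mu \to 0$ simultaneously. The remaining verifications are routine perturbations of the corresponding facts for Fisher--KPP.
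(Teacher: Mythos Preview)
Your proposal is correct and follows essentially the same route as the paper: verify Hypothesis~\ref{hyp: PDR} by factoring the dispersion relation, regularize the singular perturbation via the uniform invertibility of $\mu D\partial_y^2 + 2\mu\partial_y - K$ (the paper packages this as a preconditioner $(\delta^2 D\partial_{yy}-K)^{-1}$ with $\mu=\delta^2$ and proves the needed estimates on weighted spaces), construct the critical front by a far-field/core ansatz and the implicit function theorem off the Fisher--KPP front at $\delta=0$, and exclude unstable point spectrum by slaving $V$ to $U$ (Schur complement) and then running the Lyapunov--Schmidt / Evans-type construction of \cite{AveryGarenaux} near $\lambda=0$. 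The only presentational difference is that for existence you reduce to a scalar equation before doing far-field/core, while the paper keeps both components and preconditions the $V$-equation of the full map; these are equivalent formulations of the same idea.
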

	
	The remainder of this section is dedicated to proving Theorem \ref{t: transcritical}. We first compute the linear spreading speed.
	\begin{lemma} \label{l: transcrit hyp 1}
		For $\mu > 0$, the system \eqref{e: transcritical scaled 2} satisfies Hypothesis \ref{hyp: PDR}, with $c_* = 2, \eta_* = 1$. 
	\end{lemma}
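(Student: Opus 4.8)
The plan is to exploit the block structure of the linearization of \eqref{e: transcritical scaled 2} about the homogeneous state $(U,V)=(0,0)$. The expansions in \eqref{e: transcritical nonlinearity expansions 2} force $g_0$ together with all of its first derivatives to vanish at the origin, while $\mu g_1$ contributes only an $\mathrm{O}(\mu)$ linear-in-$V$ term $\mu B(\mu) V$ with $B(\mu)$ smooth and bounded as $\mu\to 0$. Hence the linearization decouples: in the frame moving with speed $c$, the $U$-equation linearizes to the scalar Fisher-KPP operator $\partial_y^2+c\partial_y+1$, and the $V$-equation to $\mu D\partial_y^2+\mu c\partial_y-K+\mu B(\mu)$. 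Consequently the dispersion relation \eqref{e: dispersion} factors as
\[
  d_c(\lambda,\nu)=\big(\nu^2+c\nu+1-\lambda\big)\cdot\det\!\big(\mu D\nu^2+\mu c\nu I+\mu B(\mu)-K-\mu\lambda I\big)=:d_c^U(\lambda,\nu)\,d_c^V(\lambda,\nu).
\]

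I would first treat the scalar factor $d_c^U$, which is exactly the classical Fisher-KPP symbol. The double-root conditions $d_c^U=\partial_\nu d_c^U=0$ give $\nu=-c/2$, $\lambda=1-c^2/4$; marginal stability $\Re\lambda=0$ selects $c_*=2$, $\nu_*=-1$, $\lambda_*=0$, so $\eta_*=-\nu_*=1$. A direct substitution yields the exact identities $d_{c_*}^U(\lambda,-1+\tilde\nu)=\tilde\nu^2-\lambda$ and $d_{c_*}^U(\lambda,-1+ik)=-k^2-\lambda$, from which conditions i)--iii) of Hypothesis \ref{hyp: PDR} hold for the $U$-factor: the pinching is manifest from the two roots $\nu_\pm(\lambda)=-1\pm\sqrt{\lambda}$, the only solution of $d_{c_*}^U(i\omega,-1+ik)=0$ is $\omega=k=0$, and there is no root with $\Re\lambda>0$.

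It then remains to show that the factor $d_{c_*}^V$ cannot interfere: that $d_{c_*}^V(\lambda,-1+ik)\neq 0$ for all $k\in\R$ whenever $\Re\lambda\geq 0$, and that $d_{c_*}^V$ is nonzero on a fixed neighborhood of $(\lambda,\tilde\nu)=(0,0)$. Factoring $\mu$ out of $d_{c_*}^V$, a zero at $\nu=-1+ik$ would force $\lambda$ to be an eigenvalue of $D\nu^2+c_*\nu I+B(\mu)-\tfrac1\mu K$; expanding $\nu^2=1-k^2-2ik$ and substituting $\tilde k=\sqrt{\mu}\,k$, this matrix takes the form $-D\tilde k^2-K+R(\tilde k,\mu)$ with $R(\tilde k,\mu)=\mathrm{O}(\mu+\sqrt{\mu}\,|\tilde k|)$. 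Assumption \eqref{e: no turing} (equivalently, the uniform resolvent bound \eqref{e: transcritical uniform bound symbol}) guarantees that $-D\tilde k^2-K-\tilde\lambda I$ is invertible for all $\Re\tilde\lambda\geq 0$, with resolvent norm of order $1$ for bounded $\tilde k$ and of order $\tilde k^{-2}$ for large $\tilde k$; since $R$ is correspondingly $o(1)$ relative to this resolvent once $\mu$ is small, a Neumann-series argument gives invertibility, so that $D\nu^2+c_*\nu I+B(\mu)-\tfrac1\mu K$ has spectrum in $\{\Re\lambda<0\}$, uniformly in $k$. In particular $d_{c_*}^V(0,-1)\to(-1)^{n-1}\det K\neq 0$ as $\mu\to 0$, so near $(\lambda,\tilde\nu)=(0,0)$ the factor $d_{c_*}^V$ is a nonvanishing analytic factor: the double root of $d_{c_*}$ there is simple and is precisely the one produced by $d_{c_*}^U$, yielding condition i) with $d_{10}=-d_{c_*}^V(0,-1)$, $d_{02}=d_{c_*}^V(0,-1)$, hence $d_{10}d_{02}<0$; and the marginal and unstable parts of the zero set of $d_{c_*}$ on the line $\{\Re\nu=-1\}$ coincide with those of $d_{c_*}^U$. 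This establishes Hypothesis \ref{hyp: PDR} with $c_*=2$, $\eta_*=1$.

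I expect the real obstacle to be this uniform-in-$k$ control of $d_{c_*}^V$. The two easy regimes are bounded $\tilde k$, where $-K$ dominates every remaining $\mathrm{O}(\mu)$ term and pushes the $V$-spectrum far into the left half-plane at distance of order $\mu^{-1}$, and very large $\tilde k$, where $-D\tilde k^2$ dominates the $\mathrm{O}(\sqrt{\mu}\,|\tilde k|)$ remainder; one must, however, argue uniformly across the whole range, and it is precisely for the borderline scale $k\sim\mu^{-1/2}$ that assumption \eqref{e: no turing} is tailored, via the parabolic rescaling $\tilde k=\sqrt{\mu}\,k$. Everything else reduces to elementary algebra with the scalar symbol $d_c^U$.
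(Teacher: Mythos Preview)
Your proof is correct and follows the same route as the paper: factor the dispersion relation via the block-triangular linearization, read off $c_*=2$, $\nu_*=-1$ from the scalar Fisher--KPP factor $d_c^U$, and control the $V$-factor by the parabolic rescaling $\tilde k=\sqrt{\mu}\,k$ together with assumption~\eqref{e: no turing}. The only tactical difference is in how the $V$-block is handled: the paper observes that the first-order drift $c\mu\partial_y$ contributes a purely imaginary shift, so the real part of the spectrum of $\mu D\partial_y^2+c\mu\partial_y-K$ agrees with that of $\mu D\partial_y^2-K$, which rescales \emph{exactly} to $-D\kappa^2-K$, whereas you absorb the drift and the weight shift $\nu\mapsto\nu-1$ into a small remainder $R(\tilde k,\mu)$ and invoke a Neumann series; both arguments reduce to~\eqref{e: no turing}.
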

	\begin{proof}
		Passing to a moving frame with speed $c$ and linearizing about $(U,V) = 0$, we find the dispersion relation
		\begin{align}
			d_c (\lambda, \nu; \mu) &= \det \begin{pmatrix}
				\nu^2 + c \nu + 1 - \lambda I & 0 \\
				0 & D \mu \nu^2 + c \mu \nu I -  K + f_1^{011} \mu - \lambda I
			\end{pmatrix} \\
			&= (\nu^2 + c \nu + 1) \det (D \mu \nu^2 + c \mu \nu I -  K + f_1^{011} \mu - \lambda I),
		\end{align}
		which has a simple double root at $(\lambda, \nu) = (0, -1)$ for $c = c_* := 2$. Here $f_1^{011} = \partial_v \partial_\mu f_1 (0,0;0)$. 
		
		We now verify that the essential spectrum is otherwise stable. First, note that the real part of the spectrum of $D \mu \partial_y^2 + c \mu \partial_y - K$ coincides with the real part of the spectrum of $D \mu \partial_y^2 - K$. We determine the essential spectrum of the latter operator by taking the Fourier transform, and introducing the scalings $\kappa = \sqrt{\mu} k$ and $\tilde{\lambda} = \mu \lambda$. We then see
		\begin{align}
			\det (- \mu D k^2 - K - \mu \lambda I) = \det (- D \kappa^2 - K - \tilde{\lambda} I). 
		\end{align}
		By \eqref{e: no turing}, the latter polynomial has no roots $\kappa \in \R, \tilde{\lambda} \in \C$ with $\Re \tilde{\lambda} \geq 0$, which implies the desired result since $\lambda = \mu^{-1} \tilde{\lambda}$ with $\mu > 0$. 
	\end{proof}
	
	We now determine the selected state in the wake of the invasion process.
	\begin{lemma}\label{l: transcritical stability on left}
		The system \eqref{e: transcritical scaled 2} admits a spatially uniform equilibrium solution 
		\begin{align}
			W_* (\mu) = \begin{pmatrix} U_*(\mu) \\ V_*(\mu) \end{pmatrix} = \begin{pmatrix} 1 \\ 0 \end{pmatrix} + \mathrm{O}(\mu),
		\end{align}
		which is smooth in $\mu$. Furthermore, the essential spectrum of the linearization of \eqref{e: transcritical scaled 2} about $W_*(\mu)$ is strictly stable. 
	\end{lemma}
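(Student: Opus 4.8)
The plan is to obtain $W_*(\mu)$ from the implicit function theorem, and then to pin down the essential spectrum by a Schur-complement computation that exploits the singular $\mu^{-1}$ scaling of the $V$-block.

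First, for existence: spatially uniform equilibria of \eqref{e: transcritical scaled 2} solve
\begin{align*}
0 = U - U^2 + g_0(U,V;\mu), \qquad 0 = -KV + \mu\, g_1(U,V;\mu),
\end{align*}
the prefactor $\mu$ in front of $V_\tau$ being irrelevant for steady states. At $\mu = 0$ the second equation forces $V = 0$, as $K$ is invertible, and the first collapses to $U - U^2 = 0$ since $g_0(U,0;0) = 0$ by the expansions \eqref{e: transcritical nonlinearity expansions 2}; we select the root $U = 1$. The Jacobian in $(U,V)$ of the right-hand side at $(U,V,\mu) = (1,0,0)$ is block triangular, with diagonal blocks $1 - 2U = -1$ (at $U = 1$) and $-K$, both invertible, so the implicit function theorem produces a branch $W_*(\mu) = (U_*(\mu), V_*(\mu))^\top$, smooth in $\mu$, with $W_*(0) = (1,0)^\top$, i.e.\ $W_*(\mu) = (1,0)^\top + \mathrm{O}(\mu)$.

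Next, for the essential spectrum: for $\mu > 0$ I would divide the $V$-equation by $\mu$ to put \eqref{e: transcritical scaled 2} in standard reaction-diffusion form and linearize about $W_*(\mu)$. Testing against $e^{iky}$ produces the Fourier symbol
\begin{align*}
M(k;\mu) = \begin{pmatrix} -k^2 + a(\mu) & b(\mu)^\top \\ d(\mu) & -Dk^2 - \mu^{-1}K + c(\mu) \end{pmatrix},
\end{align*}
where $a(\mu) = 1 - 2U_*(\mu) + \partial_U g_0(W_*(\mu);\mu) \to -1$ as $\mu \to 0$, and $b(\mu), c(\mu), d(\mu)$ stay bounded (they are derivatives of the smooth functions $g_0, g_1$ evaluated along the branch, with $V_*(\mu) \to 0$). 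The essential spectrum is $\{\lambda \in \C : \det(M(k;\mu) - \lambda I) = 0 \text{ for some } k \in \R\}$, and I would show it is contained in $\{\Re\lambda \le -\tfrac12\}$. The crucial observation is that after the rescaling $\kappa = \sqrt{\mu}\, k$, $\tilde\lambda = \mu\lambda$ the lower-right block equals $\mu^{-1}\bigl(-D\kappa^2 - K - \tilde\lambda I + \mu\, c(\mu)\bigr)$, and by \eqref{e: no turing} and \eqref{e: transcritical uniform bound symbol}, supplemented with the trivial bound for $\kappa$ or $|\tilde\lambda|$ large, the matrix $-D\kappa^2 - K - \tilde\lambda I$ is uniformly invertible for $\kappa \in \R$ and $\Re\tilde\lambda \ge -\delta$, some $\delta > 0$. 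Hence for $\mu$ small and $\Re\lambda \ge -\tfrac12$ the lower-right block of $M(k;\mu) - \lambda I$ is invertible with inverse of norm $\mathrm{O}(\mu)$, uniformly in $k$. Taking the Schur complement with respect to it, a root of $\det(M(k;\mu) - \lambda I) = 0$ with $\Re\lambda \ge -\tfrac12$ would have to satisfy $\lambda = -k^2 + a(\mu) + \mathrm{O}(\mu) = -k^2 - 1 + \mathrm{O}(\mu)$, whose real part lies below $-\tfrac12$ for $\mu$ small --- a contradiction. Thus the essential spectrum of the linearization about $W_*(\mu)$ is strictly stable.

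The only delicate step is the singular block $-\mu^{-1}K$: one must check that the resolvent estimate on the $V$-block is uniform in the physical wavenumber $k$ and not merely on compact sets of the rescaled wavenumber $\kappa$. This is exactly what the no-Turing assumption \eqref{e: no turing} and the uniform bound \eqref{e: transcritical uniform bound symbol} provide, once combined with the elementary large-wavenumber estimate. Everything else --- strict stability of the leading scalar block ($\approx -k^2 - 1$) and the fact that the mixed quadratic terms in \eqref{e: transcritical nonlinearity expansions 2} enter only through the bounded coefficients $b, c, d$ --- is a routine perturbation.
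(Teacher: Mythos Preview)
Your proof is correct and follows the same approach as the paper: the implicit function theorem at $(U,V,\mu)=(1,0,0)$ for existence, and the no-Turing assumption \eqref{e: no turing} for spectral stability. The paper's own proof of the stability half is a single sentence (``Using \eqref{e: no turing}, one finds that the essential spectrum of the linearization about $W_*(\mu)$ is stable''), so your Schur-complement computation is a legitimate and careful way to fill in exactly the detail the paper omits; nothing in your argument is at odds with the paper's reasoning.
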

	\begin{proof}
		Spatially constant equilibria to \eqref{e: transcritical scaled 2} for $\mu > 0$ solve
		\begin{align}
			0 &= U - U^2 + \frac{1}{\mu^2} f_0 (\mu U, \mu V; \mu) = U - U^2 + f_0^{110} U V + f_0^{020} V^2 + \mathrm{O}(\mu) \\
			0 &= -K V + \frac{1}{\mu} f_1 (\mu U, \mu V; \mu) = - KV + \mathrm{O}(\mu),
		\end{align}
		for some constants $f^{110}_0, f_0^{020} \in \R$, using the expansions \eqref{e: transcritical nonlinearity expansions 2}. We then find a solution $(U, V)^T = (1, 0)^T$ at $\mu = 0$. One readily verifies that the linearization at this solution is invertible, and so the existence of $W_*(\mu)$ follows from the implicit function theorem. Using \eqref{e: no turing}, one finds that the essential spectrum of the linearization about $W_*(\mu)$ is stable, as desired. 
	\end{proof}
	
	We now prove the existence of pulled fronts traveling with the linear spreading speed $c_* = 2$ near the transcritical bifurcation. Such fronts solve the traveling wave equation
	\begin{align}
		U_{yy} + 2 U_y + U - U^2 + g_0 (U, V; \mu) &= 0, \nonumber\\
		\mu D V_{yy} + 2 \mu V_y -  K V + \mu g_1 (U, V; \mu) &= 0. \label{e: transcrit TW2}
	\end{align}
	Intuitively, when $\mu$ is small the second equation should imply $V \approx 0$ by invertibility of $K$, so that existence and properties of fronts can then be recovered from the first equation. The perturbation in \eqref{e: transcrit TW2} to $\mu \neq 0$ is singular, however. We overcome this by using appropriately chosen preconditioners to regularize the singular perturbation, as in \cite{AveryGarenaux, GohScheel, JensArnd}. 
	
	To construct invasion fronts, we make the far-field/core ansatz
	\begin{align}
		\begin{pmatrix}
			U(x) \\ V(x) 
		\end{pmatrix} =
		\begin{pmatrix}
			U_\mathrm{ff/c} (x) \\  V_\mathrm{ff/c} (x)
		\end{pmatrix} :=
		W_*(\mu) \chi_- (x) + \begin{pmatrix} w_U \\ w_V \end{pmatrix} + e_0 \chi_+ (x) (a + x) e^{-x}, \label{e: transcrit ff core ansatz}
	\end{align}
	where $e_0 = (1, 0)^T \in \R\times \R^{n-1}$ and $a \in \R$. Inserting this ansatz into \eqref{e: transcrit TW2}, we find an equation
	\begin{align}
		F (w_U, w_V, a; \mu) = 0, 
	\end{align}
	where $F$ is defined by
	\begin{align}
		F (w_U, w_V, a; \mu) = \begin{pmatrix}
			(\partial_{yy} + 2 \partial_y + 1) U_\mathrm{ff/c} - U_\mathrm{ff/c}^2 + g_0 ( U_\mathrm{ff/c},  V_\mathrm{ff/c}; \mu) \\
			(\mu D \partial_{yy} - K)  V_\mathrm{ff/c} + 2 \mu \partial_y V_\mathrm{ff/c} + \mu g_1 ( U_\mathrm{ff/c},  V_\mathrm{ff/c}; \mu)
		\end{pmatrix}. 
	\end{align}
	The singularly perturbed structure of \eqref{e: transcritical scaled 2} presents an obstacle in choosing a consistent domain for $F$, since the second component of $F$ involves two spatial derivatives for $\mu \neq 0$, but no derivatives for $\mu = 0$. We overcome this by letting $\mu = \delta^2$ and defining the regularized function
	\begin{align}
		G(w_U, w_V, a; \delta) = \begin{pmatrix}
			I & 0 \\ 0 & (\delta^2 D \partial_{yy} - K)^{-1} 
		\end{pmatrix}
		F (w_U, w_V, a; \delta^2). \label{e: transcrit G def}
	\end{align}
	We fix $\delta_0 > 0$ small and $\eta = 1 + \tilde{\eta}$ with $\tilde{\eta}$ small, and consider $G$ as a function
	\begin{align}
		G : H^2_{\mathrm{exp}, 0, \eta} \times H^1_{\mathrm{exp}, 0, \eta} \times (-\delta_0, \delta_0) \to L^2_{\mathrm{exp}, 0, \eta} \times H^1_{\mathrm{exp}, 0, \eta}. 
	\end{align}
	
	To prove that $G$ is well-defined on these spaces, we will need the following estimates on the preconditioner $(\delta^2 D \partial_y^2 - K)^{-1}$. 
	\begin{lemma}
		Fix a non-negative integer $m$. There exist positive constants $\delta_0$, and $C = C(\delta_0, m)$ such that
		\begin{align}
			\| (\delta^2 D \partial_{yy} - K)^{-1} \|_{H^m \to H^m} &\leq C, \label{e: preconditioner 1} \\
			\| (\delta^2 D \partial_{yy} - K)^{-1} \|_{H^m \to H^{m+1}} &\leq \frac{C}{|\delta|}. \label{e: preconditioner 2} 
		\end{align}
		for all $|\delta| < \delta_0$.
	\end{lemma}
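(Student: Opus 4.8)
The plan is to diagonalize the constant-coefficient operator $\delta^2 D \partial_{yy} - K$ with the Fourier transform and reduce both bounds to uniform estimates on its matrix symbol. Writing $\hat g$ for the Fourier transform in $y$ with dual variable $k$, the operator $(\delta^2 D \partial_{yy} - K)^{-1}$ acts as the Fourier multiplier $M_\delta(k) := (-\delta^2 D k^2 - K)^{-1}$. This is well defined for every $k \in \R$ and every $\delta$, since setting $\lambda = 0$ in \eqref{e: no turing} and rescaling $\kappa = \delta k$ gives $\det(-\delta^2 D k^2 - K) = \det(-D\kappa^2 - K) \neq 0$. By Plancherel's theorem, for a non-negative integer $m$,
\begin{align*}
\| (\delta^2 D \partial_{yy} - K)^{-1} g \|_{H^m}^2 = \int_\R (1+k^2)^m |M_\delta(k)\, \hat g(k)|^2 \, dk \leq \Big( \sup_{k \in \R} |M_\delta(k)| \Big)^2 \| g \|_{H^m}^2,
\end{align*}
and likewise, pulling one extra factor $(1+k^2)$ out of the weight $(1+k^2)^{m+1}$,
\begin{align*}
\| (\delta^2 D \partial_{yy} - K)^{-1} g \|_{H^{m+1}}^2 \leq \Big( \sup_{k \in \R} (1+k^2)\, |M_\delta(k)|^2 \Big) \| g \|_{H^m}^2.
\end{align*}
So it suffices to prove $\sup_k |M_\delta(k)| \leq C$ and $\sup_k \sqrt{1+k^2}\,|M_\delta(k)| \leq C/|\delta|$, uniformly for $0 < |\delta| < \delta_0$; note the weight $(1+k^2)^m$ factors out cleanly, so the resulting constant depends on $m$ only through this reduction.

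For the symbol bounds, the key point is that the substitution $\kappa = \delta k$ eliminates $\delta$: $M_\delta(k) = (-D\kappa^2 - K)^{-1}$. I would observe that the matrix-valued map $\kappa \mapsto (1+\kappa^2)(-D\kappa^2 - K)^{-1}$ is continuous on $\R$ — again using \eqref{e: no turing} with $\lambda = 0$ to rule out singularities — and converges to $-D^{-1}$ as $\kappa \to \pm\infty$, hence is bounded. This yields a constant $C_0 > 0$ depending only on $D$ and $K$ with $|(-D\kappa^2 - K)^{-1}| \leq C_0/(1+\kappa^2)$ for all $\kappa \in \R$, so in particular $|M_\delta(k)| \leq C_0$, which gives \eqref{e: preconditioner 1}. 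For \eqref{e: preconditioner 2}, I would estimate $\sqrt{1+k^2}\,|M_\delta(k)| \leq C_0 \sqrt{1+k^2}/(1+\delta^2 k^2)$ and split into the regimes $|\delta k| \leq 1$ and $|\delta k| \geq 1$: in the first, shrinking $\delta_0 \leq 1$ gives $\sqrt{1+k^2} \leq \sqrt{1 + 1/\delta^2} \leq \sqrt 2/|\delta|$ while the denominator is $\geq 1$; in the second, $\sqrt{1+k^2}/(1+\delta^2 k^2) \leq \sqrt 2 |k|/(\delta^2 k^2) = \sqrt 2/(\delta^2 |k|) \leq \sqrt 2/|\delta|$, using $|k| \geq 1/|\delta|$. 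Either way the quantity is bounded by $\sqrt 2\, C_0/|\delta|$, which is the claim.

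I do not expect a genuine obstacle here; the argument is essentially bookkeeping. The one place where the structural hypothesis \eqref{e: no turing} is needed, rather than mere invertibility of $D$ and $K$, is the uniform bound $|(-D\kappa^2 - K)^{-1}| \leq C_0/(1+\kappa^2)$: this fails precisely if $-D\kappa^2 - K$ is singular at some real $\kappa$, i.e. in the Turing-instability scenario that \eqref{e: no turing} excludes. The resulting constants depend only on $D$, $K$, $m$, and the reduction $\delta_0 \leq 1$, consistent with the statement.
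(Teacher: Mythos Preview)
Your argument is correct and follows the same overall strategy as the paper: reduce via Plancherel to a uniform bound on the matrix symbol $(-\delta^2 D k^2 - K)^{-1}$, then eliminate $\delta$ by the rescaling $\kappa = \delta k$. The one place where you diverge is in obtaining \eqref{e: preconditioner 2}. The paper first proves an $H^m \to H^{m+2}$ bound of order $\delta^{-2}$ (via a Neumann expansion of $(-D\kappa^2-K)^{-1}$ at $\kappa = \infty$) and then interpolates with \eqref{e: preconditioner 1}. You instead establish the sharper pointwise decay $|(-D\kappa^2-K)^{-1}| \leq C_0/(1+\kappa^2)$ directly, by noting that $(1+\kappa^2)(-D\kappa^2-K)^{-1}$ is continuous with finite limits $-D^{-1}$ at $\pm\infty$, and then bound $\sqrt{1+k^2}\,|M_\delta(k)|$ by a case split on $|\delta k|$. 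Your route is slightly more elementary in that it avoids the interpolation step; the paper's route has the minor advantage that the $H^m\to H^{m+2}$ estimate is recorded explicitly, which can be convenient elsewhere. Both are entirely standard and yield the same constants up to harmless factors.
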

	\begin{proof}
		By Plancherel's theorem, we have
		\begin{align}
			\| (\delta^2 D \partial_y^2 - K)^{-1} f \|_{H^m} &= \left\| k \mapsto \langle k \rangle^m (-\delta^2 D k^2 - K)^{-1} \hat{f} (k) \right\|_{L^2} \\
			&\leq \sup_{k \in \R} | (-\delta^2 D k^2 - K)^{-1} | \| f \|_{H^m},
		\end{align}
		where $\hat{f}$ denotes the Fourier transform of $f$ and $|\cdot|$ is the induced matrix norm from the Euclidean norm. Introducing $\kappa = \delta k$, we find
		\begin{align}
			\sup_{k \in \R} | (-\delta^2 D k^2 - K)^{-1}| = \sup_{\kappa \in \R} | (- D \kappa^2 - K)^{-1}| \leq C
		\end{align}
		for all $\delta$ small, where the last estimate follows from \eqref{e: no turing} together with the fact that the resolvent operator of a bounded operator (in particular, a matrix) is uniformly bounded for large spectral parameter. This establishes \eqref{e: preconditioner 1}. 
		
		To prove \eqref{e: preconditioner 2}, we first estimate in $H^{m+2}$ and then interpolate. By Plancherel's theorem, we have
		\begin{align*}
			\left\| k \mapsto (-\delta^2 D k^2 - K)^{-1} \hat{f} (k) \right\|_{H^{m+2}} \leq \sup_{k \in \R} \left( \langle k \rangle^2 | (-\delta^2 D k^2 - K)^{-1}| \right) \| f \|_{H^m}. 
		\end{align*}
		Again introducing $\kappa = \delta k$, we find
		\begin{align*}
			\langle k \rangle^2 (-\delta^2 D k^2 - K)^{-1} = (-D \kappa^2 - K)^{-1} + \frac{\kappa^2}{\delta^2} (- D\kappa^2 - K)^{-1}. 
		\end{align*}
		The first term is bounded by \eqref{e: no turing}, while for the second term, using the Neumann series expansion at $ -\kappa^2 =\infty$ of the resolvent $(D^{-1} K + \kappa^2)^{-1}$, we have
		\begin{align}
			\frac{\kappa^2}{\delta^2} |(- D\kappa^2 - K)^{-1}| \leq \frac  {\kappa^2}{\delta^2} \frac{C}{\kappa^2} \leq \frac{C}{\delta^2},
		\end{align}
		from which we conclude
		\begin{align}
			\left\| k \mapsto (-\delta^2 D k^2 - K)^{-1} \hat{f} (k) \right\|_{H^{m+2}} \leq \frac{C}{\delta^2} \| f \|_{H^m}. \label{e: H2 estimate}
		\end{align}
		The estimate \eqref{e: preconditioner 2} then follows from interpolating \eqref{e: H2 estimate} and \eqref{e: preconditioner 1}. 
	\end{proof}
	
	We now extend the preconditioner estimates \eqref{e: preconditioner 1}-\eqref{e: preconditioner 2} to exponentially weighted spaces. 
	
	\begin{lemma}\label{l: preconditioner regularity}
		Fix a non-negative integer $m$. There exist positive constants $\delta_0, \eta_0$ and $C = C(\delta_0, \eta_0, m)$ such that 
		\begin{align}
			\| (\delta^2 D \partial_{yy} - K)^{-1} \|_{H^m_{\mathrm{exp}, 0, \eta} \to H^m_{\mathrm{exp}, 0, \eta}} &\leq C, \label{e: preconditioner weighted 1} \\
			\| (\delta^2 D \partial_{yy} - K)^{-1} \|_{H^m_{\mathrm{exp}, 0, \eta} \to H^{m+1}_{\mathrm{exp}, 0, \eta}} &\leq \frac{C}{|\delta|}. \label{e: preconditioner weighted 2} 
		\end{align}
		provided $|\delta| \leq \delta_0$ and $\eta = 1 + \tilde{\eta}$ with $|\tilde{\eta}| \leq \eta_0$. 
	\end{lemma}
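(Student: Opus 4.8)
The plan is to reduce this weighted estimate to the unweighted preconditioner bounds \eqref{e: preconditioner 1}--\eqref{e: preconditioner 2} by conjugating the preconditioner with the exponential weight. Write $\omega = \omega_{0,\eta} = e^{a}$, where $a$ is smooth with $a\equiv 0$ for $y\le -1$ and $a(y) = \eta y$ for $y\ge 1$; taking $a = \eta\, b$ for a fixed profile $b$, the coefficient $a'$ is uniformly bounded and $a''$ is compactly supported and uniformly bounded for $\eta = 1+\tilde{\eta}$ with $|\tilde{\eta}|\le\eta_0$. Since the norm on $H^m_{\mathrm{exp}, 0, \eta}$ is by definition $\|\omega\,\cdot\,\|_{H^m}$, multiplication by $\omega$ is an isometry onto $H^m$, so that
$$\|(\delta^2 D\partial_{yy} - K)^{-1}\|_{H^m_{\mathrm{exp}, 0, \eta}\to H^{m'}_{\mathrm{exp}, 0, \eta}} = \|L_{\delta,\omega}^{-1}\|_{H^m\to H^{m'}}, \qquad L_{\delta,\omega} := \omega(\delta^2 D\partial_{yy}-K)\omega^{-1},$$
and it suffices to prove the two bounds for $L_{\delta,\omega}^{-1}$, with $m' = m$ and $m' = m+1$ respectively.

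A direct computation of the conjugated operator gives $L_{\delta,\omega} = (\delta^2 D\partial_{yy}-K) + \delta^2 D\, B$, where $B := -2a'\partial_y + ((a')^2 - a'')$ is a first-order operator with smooth, uniformly bounded coefficients; in particular $B : H^{m+1}\to H^m$ is bounded with norm controlled by some $C(m,\eta_0)$ independent of $\delta$. Setting $T_\delta := (\delta^2 D\partial_{yy}-K)^{-1}$ and factoring \emph{on the left}, $L_{\delta,\omega} = (I + \delta^2 D\, B\, T_\delta)(\delta^2 D\partial_{yy}-K)$, whence $L_{\delta,\omega}^{-1} = T_\delta\,(I + \delta^2 D\, B\, T_\delta)^{-1}$. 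The key observation is that the singular factor $1/|\delta|$ in $\|T_\delta\|_{H^m\to H^{m+1}}\le C/|\delta|$ is more than absorbed by the prefactor $\delta^2$: composing $T_\delta:H^m\to H^{m+1}$, then $B:H^{m+1}\to H^m$, then the bounded matrix $D$, yields $\|\delta^2 D\,B\,T_\delta\|_{H^m\to H^m}\le |\delta|\,C(m,\eta_0)$. Choosing $\delta_0 = \delta_0(m,\eta_0)$ small enough, $I + \delta^2 D\,B\,T_\delta$ is then invertible on $H^m$ with inverse bounded by $2$ via a Neumann series, and combining with \eqref{e: preconditioner 1}--\eqref{e: preconditioner 2} gives $\|L_{\delta,\omega}^{-1}\|_{H^m\to H^m}\le 2C$ and $\|L_{\delta,\omega}^{-1}\|_{H^m\to H^{m+1}}\le 2C/|\delta|$, which are \eqref{e: preconditioner weighted 1}--\eqref{e: preconditioner weighted 2} after undoing the conjugation.

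The step I expect to require the most care is the regularity bookkeeping in the factorization: one must check that $B$ loses exactly one derivative while $T_\delta$ gains exactly one (at the cost of the $1/|\delta|$), so that the Neumann series closes on $H^m$ for every $m\ge 0$ without ever appealing to negative-index spaces or to an extra derivative of $T_\delta$ --- this is precisely why one factors on the left rather than the right. It is also here that one uses that $a''$ has compact support (so $B$ has no growth at $\pm\infty$ and the coefficient bounds are genuine $L^\infty$ bounds) and that the weight can be taken linear in $\eta$ (so all constants are uniform for $\eta$ near $1$); modulo these points the argument is a routine perturbation of \eqref{e: preconditioner 1}--\eqref{e: preconditioner 2}.
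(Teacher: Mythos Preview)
Your proof is correct and follows essentially the same strategy as the paper: conjugate the preconditioner by the exponential weight, then handle the resulting first-order perturbation $\delta^2 D B$ via a Neumann series, using that the $\delta^2$ prefactor beats the $1/|\delta|$ cost of the smoothing estimate \eqref{e: preconditioner 2}. The only difference is cosmetic --- the paper first passes to the two-sided space $H^m_{\mathrm{exp},\eta,\eta}$ so that conjugation is by the pure exponential $e^{\eta y}$ (yielding constant-coefficient perturbation terms $-2\delta^2\eta D\partial_y + \delta^2\eta^2 D$), whereas you conjugate directly by the one-sided weight $\omega_{0,\eta}$ (yielding the variable-coefficient but bounded $B$); both routes close the Neumann series identically.
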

	\begin{proof}
		Note that $H^m_{\mathrm{exp}, 0, \eta} = H^m \cap H^m_{\mathrm{exp}, \eta, \eta}$, with equivalence of norms
		\begin{align*}
			\| f \|_{H^m_{\mathrm{exp}, 0, \eta}} \sim \| f \|_{H^m} + \| f \|_{H^m_{\mathrm{exp}, \eta, \eta}},
		\end{align*}
		so it suffices to prove the estimates on $H^m_{\mathrm{exp}, \eta, \eta}$ for all $|\eta| \leq 2$. The advantage of considering these spaces instead is that multiplication by $e^{-\eta x}$ is an isomorphism from $H^m$ to $H^m_{\mathrm{exp}, \eta, \eta}$, so that it suffices to prove $L^2$-estimates on the conjugate operator $(\delta^2 D (\partial_y - \eta)^2 - K)^{-1}$. 
		
		Having already established the estimates for $\eta = 0$, we separate out this principle part, writing
		\begin{align*}
			\delta^2 D (\partial_y - \eta)^2 - K = [\delta^2 D \partial_y^2 - K] - [2 \delta^2 \eta D \partial_y + \delta^2 \eta^2 D] =: T_0(\delta) + \tilde{T}(\eta, \delta). 
		\end{align*}
		To take advantage of the already established invertibility of $T_0$, we write
		\begin{align*}
			T_0(\delta) + \tilde{T}(\eta, \delta) = [I + \tilde{T}(\eta, \delta) T_0(\delta)^{-1}] T_0(\delta)
		\end{align*}
		Assuming for now invertibility, the inverse of this operator is given by
		\begin{align}
			(T_0(\delta) + \tilde{T}(\eta, \delta))^{-1} = T_0(\delta)^{-1} (I + \tilde{T}(\eta, \delta) T_0(\delta)^{-1})^{-1}. \label{e: precond factoring}
		\end{align}
		Note that $\| \tilde{T} (\eta, \delta) \|_{H^{m+1} \to H^m} \leq C \delta^2$ for $|\eta| \leq 2$, and by \eqref{e: preconditioner 1} we have $\| T_0 (\delta)^{-1} \|_{H^m \to H^{m+1}} \leq C |\delta|^{-1}$ for $\delta$ small. Hence
		\begin{align*}
			\| \tilde{T}(\eta, \delta) T_0(\delta)^{-1}\|_{H^m \to H^m} \leq C |\delta|
		\end{align*}
		for $\delta$ and $|\eta| \leq 2$. Hence we can invert $(I + \tilde{T}(\eta, \delta) T_0(\delta)^{-1})$ in $H^m$ with the geometric series, and the inverse is uniformly bounded from $H^m$ to $H^m$ for $\delta$ small and $|\eta| \leq 2$. Using \eqref{e: precond factoring}, we then obtain
		\begin{align*}
			\| (T_0(\delta) + \tilde{T}(\eta, \delta))^{-1} \|_{H^m \to H^m} \leq \| T_0 (\delta)^{-1} \|_{H^m \to H^m} \| (I + \tilde{T}(\eta, \delta) T_0(\delta)^{-1})^{-1} \|_{H^m \to H^m} \leq C,
		\end{align*}
		and 
		\begin{align}
			\| (T_0(\delta) + \tilde{T}(\eta, \delta))^{-1} \|_{H^m \to H^{m+1}} \leq \| T_0 (\delta)^{-1} \|_{H^m \to H^{m+1}} \| (I + \tilde{T}(\eta, \delta) T_0(\delta)^{-1})^{-1} \|_{H^m \to H^m} \leq \frac{C}{|\delta|}
		\end{align}
		for $\delta$ small and $|\eta| \leq 2$. 
	\end{proof}
	
	\begin{corollary}
		There exist $\delta_0 > 0$ and $\tilde{\eta}> 0$ sufficiently small, such that for $\eta = 1 + \tilde{\eta}$, the mapping $G : H^2_{\mathrm{exp}, 0, \eta} \times H^1_{\mathrm{exp}, 0, \eta} \times \R \times (-\delta_0, \delta_0) \to L^2_{\mathrm{exp}, 0, \eta} \times H^1_{\mathrm{exp}, 0, \eta}$ is well-defined, smooth in $w_U, w_V,$ and $a$, and continuous in $\delta$. 
	\end{corollary}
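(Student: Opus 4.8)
The plan is to check the two components of $G$ separately, using the structure of the far-field/core ansatz \eqref{e: transcrit ff core ansatz} — in which $W_*(\mu)\chi_-$ carries the (non-decaying) left state and $e_0\chi_+(a+y)e^{-y}$ the leading-edge behavior — together with the fact, recorded in the proof of Lemma \ref{l: transcritical stability on left}, that $W_*(\mu) = (U_*(\mu), V_*(\mu))$ solves the spatially constant equilibrium equations $U_* - U_*^2 + g_0(U_*, V_*; \mu) = 0$ and $-KV_* + \mu g_1(U_*, V_*; \mu) = 0$ exactly. Throughout I will use that $H^1_{\mathrm{exp}, 0, \eta}$ is a Banach algebra which embeds continuously in $L^\infty$, and that $1 < \eta < 2$ since $\eta = 1 + \tilde\eta$ with $\tilde\eta$ small.

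First component. The preconditioner in \eqref{e: transcrit G def} acts as the identity on the first block, so $G_1 = F_1$ is the first component of the traveling-wave residual evaluated on the ansatz. Writing the ansatz as $P + (w_U, w_V)^T$, with $P$ the ($w$-independent) prescribed part, I split $G_1 = F_1(P; \mu) + [\text{affine and higher-order in } (w_U, w_V)]$. For the $P$-part: on $y \leq -1$ the cutoffs give $P = W_*(\mu)$, and then $(\partial_{yy} + 2\partial_y + 1)U_*(\mu) - U_*(\mu)^2 + g_0(U_*(\mu), V_*(\mu); \mu) = 0$ by the equilibrium identity; on $y \geq 1$ one has $P = (e_0(a+y)e^{-y}, 0)$, and since $(\partial_{yy} + 2\partial_y + 1) = (\partial_y + 1)^2$ annihilates $(a+y)e^{-y}$, only the quadratic nonlinearity $-[(a+y)e^{-y}]^2 + g_0((a+y)e^{-y}, 0; \mu)$ survives, which is $\mathrm{O}((a+y)^2 e^{-2y})$ and hence lies in $L^2_{\mathrm{exp},0,\eta}$ precisely because $\eta < 2$; the transition region is harmless. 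The $w$-dependent part is controlled using $w_U \in H^2_{\mathrm{exp}, 0, \eta}$, $w_V \in H^1_{\mathrm{exp}, 0, \eta}$, boundedness of $P$ and its derivatives, and the algebra property. Smoothness in $(w_U, w_V, a)$ follows because $F_1$ depends on $a$ polynomially through $P$ and on $(w_U, w_V)$ through Nemytskii-type operators $w \mapsto g_0(P + w; \mu)$ and $w \mapsto (P_U + w_U)^2$, which are smooth on the weighted Sobolev spaces; smoothness (hence continuity) in $\delta$ is immediate, since $\delta$ enters only through $\mu = \delta^2$ and the smooth curve $W_*(\delta^2)$.

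Second component. Here $G_2 = (\delta^2 D \partial_{yy} - K)^{-1} F_2$, and the delicate point is that $F_2$ is a priori only in a negative-order weighted space — it contains $\delta^2 D \partial_{yy} w_V$ — and that the naive expansion $G_2 = V_{\mathrm{ff/c}} + \dots$ splits $G_2$ into pieces that individually fail to decay at $-\infty$, since $V_*(\mu)\chi_-$ does not. The remedy is to pull only $w_V$ out of the leading part: since $(\delta^2 D \partial_{yy} - K)^{-1}$ and $(\delta^2 D\partial_{yy} - K)$ are Fourier multipliers (after conjugating by the weight) whose composite is the identity, writing $F_2 = (\delta^2 D \partial_{yy} - K)w_V + R_2$ gives $G_2 = w_V + (\delta^2 D \partial_{yy} - K)^{-1} R_2$ with
\[ R_2 = \delta^2 D V_*(\mu)\chi_-'' + 2\delta^2 V_*(\mu)\chi_-' + 2\delta^2 \partial_y w_V - KV_*(\mu)\chi_- + \delta^2 g_1(U_\mathrm{ff/c}, V_\mathrm{ff/c}; \delta^2). \]
One now checks $R_2 \in L^2_{\mathrm{exp}, 0, \eta}$: the first three terms are manifestly there, and for the last two one uses that on $y \leq -1$ the combination $-KV_*(\mu)\chi_- + \delta^2 g_1(U_*(\mu) + w_U, V_*(\mu) + w_V; \delta^2)$ equals $-KV_*(\mu) + \mu g_1(U_*(\mu), V_*(\mu); \mu) = 0$ plus terms at least linear in $(w_U, w_V)$, hence decays; while on $y \geq 1$, where $V_*(\mu)\chi_- = 0$, the part of $\delta^2 g_1$ linear in $V$ is a constant multiple of $\delta^2 w_V$ and lies in $L^2_{\mathrm{exp},0,\eta}$ because $w_V$ does, and the rest of $\delta^2 g_1$ is at least quadratic and handled exactly as for $G_1$. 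Then \eqref{e: preconditioner weighted 2} with $m = 0$ gives $(\delta^2 D \partial_{yy} - K)^{-1} R_2 \in H^1_{\mathrm{exp}, 0, \eta}$, so $G_2 \in H^1_{\mathrm{exp}, 0, \eta}$, and smoothness in $(w_U, w_V, a)$ follows as before. For continuity in $\delta$: away from $\delta = 0$ the multiplier $(\delta^2 D\partial_{yy} - K)^{-1}$ depends continuously (in fact real-analytically in $\delta^2$) on $\delta$ as a bounded operator $L^2_{\mathrm{exp},0,\eta} \to H^1_{\mathrm{exp},0,\eta}$ by the resolvent identity; at $\delta = 0$ one uses $W_*(0) = (1, 0)^T$, so $V_*(0) = 0$, whence $\|R_2\|_{L^2_{\mathrm{exp},0,\eta}} = \mathrm{O}(\delta^2)$ uniformly on bounded sets of $(w_U, w_V, a)$, and combined with the bound $C/|\delta|$ in \eqref{e: preconditioner weighted 2} the correction term is $\mathrm{O}(|\delta|)$, so $G_2 \to w_V = G_2|_{\delta = 0}$.

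I expect the second component to be the main obstacle: reconciling the fact that $F_2$ only lives a priori in a negative-order space — and that the obvious splitting produces non-decaying contributions — with the requirement that $G_2$ land in $H^1_{\mathrm{exp}, 0, \eta}$ forces one to invoke the equilibrium cancellation $-KV_*(\mu) + \mu g_1(W_*(\mu); \mu) = 0$ inside the argument of the preconditioner, and then to track that the resulting residual $R_2$ vanishes to order $\delta^2$ in order to absorb the $|\delta|^{-1}$ loss in the preconditioner estimate when passing to the limit $\delta \to 0$.
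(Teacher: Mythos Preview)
Your argument is correct and follows the same overall strategy as the paper --- exponential localization from the ansatz structure (the far-field term on the right solves the linear equation exactly, leaving only an $\mathrm{O}(y^2 e^{-2y})$ nonlinear residual), smoothness from the Banach algebra property of $H^m_{\mathrm{exp},0,\eta}$, and continuity in $\delta$ from the preconditioner estimates of Lemma~\ref{l: preconditioner regularity}. The paper's own proof is a three-sentence sketch that simply cites these ingredients; you have fleshed out the details, in particular for the second component, where the paper says nothing explicit.

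Your decomposition $G_2 = w_V + (\delta^2 D\partial_{yy}-K)^{-1}R_2$, together with the equilibrium cancellation $-KV_*(\mu)+\mu g_1(W_*(\mu);\mu)=0$ on the left and the observation that $\|R_2\|_{L^2_{\mathrm{exp},0,\eta}}=\mathrm{O}(\delta^2)$, is a clean way to make precise what the paper leaves implicit: it simultaneously handles the loss of derivatives (since $w_V\in H^1$ but $F_2$ contains $\delta^2\partial_{yy}w_V$) and the $|\delta|^{-1}$ blow-up in \eqref{e: preconditioner weighted 2} at $\delta=0$. One small imprecision: on $y\geq 1$ the coefficient of $w_V$ in $g_1$ need not be constant (there can be $UV$-type terms, giving a coefficient depending on $U_{\mathrm{ff/c}}$), but it is bounded, so your conclusion stands.
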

	\begin{proof}
		That $G$ preserves exponential localization follows from the fact that the far-field term $e_0 \chi_+(x) (a+x)e^{-x}$ in \eqref{e: transcrit ff core ansatz} solves \eqref{e: transcrit TW2} up to a residual error of size $\mathrm{O}(x^2 e^{-2x})$ arising from the nonlinear terms. Smoothness in $w_U$ and $w_V$ follows from the fact that $H^m_{\mathrm{exp}, 0, \eta}$ is a Banach algebra for $m = 1, 2$. Continuity in $\delta$ follows from the estimates of Lemma \ref{l: preconditioner regularity}. 
	\end{proof}
	
	At $\delta = 0$, \eqref{e: transcrit TW2} has a solution $(U(y), V(y))^T = (q_0(y), 0)^T$, where $q_0$ is the critical Fisher-KPP front, solving
	\begin{align}
		q_0'' + 2 q_0' + q_0 - q_0^2 = 0, \quad \lim_{y \to -\infty} q_0 (y) = 1, \quad \lim_{y \to \infty} q_0 (y) = 0. 
	\end{align} 
	The front $q_0$ has asymptotics $q_0(y) \sim (a + by) e^{-y}, y \to \infty$, but by translating in space we can assume $b = 1$, changing the value of $a =: a_0$. We therefore find a corresponding solution $G(w_U^0, 0, a_0; 0)$ with
	\begin{align}
		w_U^0 (y) = q_0 (y) - \chi_- (y) - \chi_+(y) (a_0 + y) e^{-y}. 
	\end{align}
	Sicne Fisher-KPP fronts satisfy Hypotheses \ref{hyp: PDR} through \ref{hyp: no unstable point spectrum} (see e.g. \cite{AveryGarenaux}), it follows from Lemma \ref{l: fredholm properties} that the linearization $D_w G (w_U^0, 0, a_0; 0)$ in $w = (w_U, w_V)^T$ is Fredholm with index $-1$. By the Fredholm bordering lemma (see e.g. \cite[Lemma 4.4]{ArndBjornRelativeMorse}), the joint linearization $D_{(w, a)} G (w_U^0, 0, a_0; 0)$ is Fredholm with index 0. 
	\begin{lemma}\label{l: transcrit lin invertible}
		Fix $\tilde{\eta}$ small and let $\eta = 1 + \tilde{\eta}$. The joint linearization $D_{(w,a)} G (w_U^0, 0, a_0; 0) :  H^2_{\mathrm{exp}, 0, \eta} \times H^1_{\mathrm{exp}, 0, \eta} \times \R^2 \to L^2_{\mathrm{exp}, 0, \eta} \times H^1_{\mathrm{exp}, 0, \eta}$ is invertible.
	\end{lemma}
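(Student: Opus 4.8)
The plan is to exploit the essentially decoupled structure of $G$ at $\delta = 0$, where it degenerates to the scalar Fisher--KPP far-field/core problem plus an identity. At $\delta = 0$ the second block of the preconditioner in \eqref{e: transcrit G def} is $-K^{-1}$, the $\mu = \delta^2 = 0$ limit of the second component of $F$ is $-K V_\mathrm{ff/c}$, and $V_*(0) = 0$ by Lemma \ref{l: transcritical stability on left}, so $V_\mathrm{ff/c}|_{\delta=0} = w_V$ and the second component of $G(\cdot\,;0)$ is exactly $w_V$. The first component of $G(\cdot\,;0)$ equals that of $F(\cdot\,;0)$, namely $(\partial_{yy} + 2\partial_y + 1)U_\mathrm{ff/c} - U_\mathrm{ff/c}^2 + g_0(U_\mathrm{ff/c}, w_V; 0)$, and by the structure \eqref{e: transcritical nonlinearity expansions 2} every monomial of $g_0(\cdot,\cdot;0)$ has $v$-degree at least one; hence $g_0(U, 0; 0) \equiv 0$, $\partial_U g_0(U, 0; 0) \equiv 0$, and $\partial_V g_0(U, 0; 0) = f_0^{110} U$. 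Since $U_\mathrm{ff/c}$ equals the critical Fisher--KPP front $q_0$ at the base point, the first component of $G(\cdot\,;0)$ there is, up to the $w_V$-coupling, precisely the scalar Fisher--KPP far-field/core map.

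Carrying out the linearization, I would write $D_{(w,a)} G(w_U^0, 0, a_0; 0)$ in the block form
\[
\begin{pmatrix} \mathcal{A}_0 & \mathcal{B} & \mathcal{C}_0 \\ 0 & \mathrm{Id} & 0 \end{pmatrix},
\]
where $\mathrm{Id}$ is the identity on $H^1_{\mathrm{exp},0,\eta}$; the pair $(\mathcal{A}_0, \mathcal{C}_0)$ on $H^2_{\mathrm{exp},0,\eta} \times \R$ is the joint linearization in $(w_U, a)$ of the scalar Fisher--KPP far-field/core map, with $\mathcal{A}_0 = \partial_{yy} + 2\partial_y + 1 - 2 q_0$ and $\mathcal{C}_0 = (\partial_{yy} + 2\partial_y + 1 - 2q_0)[\chi_+(y)e^{-y}] \in L^2_{\mathrm{exp},0,\eta}$; and $\mathcal{B}$ is multiplication by $f_0^{110} q_0$, which is bounded $H^1_{\mathrm{exp},0,\eta} \to L^2_{\mathrm{exp},0,\eta}$ because $q_0 \in L^\infty$. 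After an obvious reordering this operator is block triangular, so its invertibility reduces to that of the diagonal blocks: given $(r_1, r_2)$ in the target space, one sets $w_V = r_2$ and then solves $\mathcal{A}_0 w_U + \mathcal{C}_0 a = r_1 - \mathcal{B} r_2$ for $(w_U, a)$, with $\|(w_U, a)\| \lesssim \|r_1\| + \|r_2\|$. (Equivalently, since the excerpt already records that $D_{(w,a)} G$ is Fredholm of index $0$, it suffices to show that the kernel is trivial, which the same computation reduces to injectivity of $(\mathcal{A}_0, \mathcal{C}_0)$.)

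It then remains to know that $(\mathcal{A}_0, \mathcal{C}_0) : H^2_{\mathrm{exp},0,\eta} \times \R \to L^2_{\mathrm{exp},0,\eta}$ is invertible for $\eta = 1 + \tilde\eta$ with $\tilde\eta > 0$ small. This is the non-degeneracy of the critical Fisher--KPP front and is the $\delta = 0$ specialization of the corresponding statement in \cite{AveryGarenaux}, which I would cite. The underlying mechanism is that the solution space of $\mathcal{A}_0 \phi = 0$ that decays at $-\infty$ is one-dimensional, spanned by $q_0'$, and $q_0'(y) \sim -y e^{-y}$ as $y \to +\infty$ has nonzero $y e^{-y}$ coefficient --- this is exactly $b \neq 0$ in \eqref{e: front asymptotics} for Fisher--KPP --- so $q_0'$ is not asymptotic to a multiple of $e^{-y}$ and hence is not represented by the far-field term or contained in $H^2_{\mathrm{exp},0,\eta}$; combined with $\mathrm{ind}(\mathcal{A}_0, \mathcal{C}_0) = 0$ this yields invertibility.

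The only step with genuine content is this last input from \cite{AveryGarenaux}; everything else is bookkeeping of the $\delta = 0$ structure. The one point to watch is that the boundedness of $\mathcal{B}$ and the Fredholm/invertibility facts for $(\mathcal{A}_0, \mathcal{C}_0)$ all hold on the single weighted scale $H^k_{\mathrm{exp},0,\eta}$ with $\eta = 1 + \tilde\eta$, $\tilde\eta > 0$ small, consistent with the choice fixed earlier in the section; no uniformity in $\delta$ is needed here, since we work at $\delta = 0$.
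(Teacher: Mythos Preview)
Your proof is correct and follows essentially the same route as the paper: both compute the block-triangular linearization with blocks $\mathcal{A}_\mathrm{kpp}$, $f_0^{110}q_0$, $\mathcal{A}_\mathrm{kpp}[\chi_+ e^{-\cdot}]$, and $I$, reduce to triviality of the kernel using the index-$0$ Fredholm property, and then conclude from the fact that the only solution of $\mathcal{A}_\mathrm{kpp}u=0$ decaying at $-\infty$ is $q_0'\sim -ye^{-y}$, which lies neither in $H^2_{\mathrm{exp},0,\eta}$ nor in the span of the far-field term $\chi_+e^{-y}$. The paper gives this last Fisher--KPP input directly rather than citing \cite{AveryGarenaux}, but the content is identical.
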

	\begin{proof}
		Since the linearization is Fredholm index 0, it suffices to show that the kernel is trivial. From a short calculation, we find that this linearization is given by
		\begin{align*}
			D_{(w,a)} G (w_U^0, 0, a_0; 0) = \begin{pmatrix}
				\mathcal{A}_\mathrm{kpp} & f_0^{110} q_0 & \mathcal{A}_\mathrm{kpp} [\chi_+ e^{-\cdot}]  \\
				0 & I & 0
			\end{pmatrix},
		\end{align*}
		where
		\begin{align}
			\mathcal{A}_\mathrm{kpp} = \partial_y^2 + 2 \partial_y + 1 - 2 q_0. 
		\end{align}
		is the linearization about the critical Fisher-KPP front. 
		Suppose that $(u_0, v_0, \alpha) \in \ker D_{(w,a)} (w_U^0, 0, a_0; 0)$. We immediately see that $v_0 = 0$, and so we must have
		\begin{align}
			\mathcal{A}_\mathrm{kpp} (u_0 + \alpha \chi_+ e^{-x}) = 0. 
		\end{align}
		with $u_0 \in H^1_{\mathrm{exp}, 0, \eta}$. 
		If $u_0$ or $\alpha$ were nonzero, then we would have a solution to $\mathcal{A}_\mathrm{kpp} u =0$ for which $\omega_{0,1} u$ is bounded, but there are no such solutions to this equation: one solution $q_0'$ comes from the translational mode and satisfies $q_0'(y) \sim y e^{-y}, y \to \infty$, and the other is exponentially growing at $-\infty$. Hence we conclude $u_0 = \alpha = 0$, and so the kernel is trivial, as desired. 
	\end{proof}
	
	\begin{corollary}\label{c: transcrit existence}
		There exists $\eta_0 > 0$ such that for $\mu = \delta^2$ with $\delta$ small, the system \eqref{e: transcrit TW2} admits front solutions $(U_\mathrm{fr}(y; \delta), V_\mathrm{fr}(y; \delta)^T$ satisfying
		\begin{align}
			\lim_{y \to -\infty} \begin{pmatrix} U_\mathrm{fr}(y; \delta) \\ V_\mathrm{fr}(y; \delta) \end{pmatrix} &= W_*(\delta^2), \\
			\begin{pmatrix} U_\mathrm{fr}(y; \delta) \\ V_\mathrm{fr}(y; \delta) \end{pmatrix} &= \begin{pmatrix} (a(\delta)+y)e^{-y} \\ 0 \end{pmatrix} + \mathrm{O}(e^{-(1+\eta_0) y}), \quad y \to \infty. 
		\end{align}
		where $a(\delta)$ is continuous in $\delta$ and satisfies $a(0) = a_0$. In particular, the system \eqref{e: transcritical scaled 2} satisfies Hypotheses \ref{hyp: front existence} and \ref{hyp: left stability}. 
	\end{corollary}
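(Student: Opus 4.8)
The plan is to obtain the fronts as zeros of the regularized far-field/core map $G$ via the implicit function theorem, using the groundwork already in place. By the corollary immediately preceding Lemma~\ref{l: transcrit lin invertible}, for $\delta_0 > 0$ and $\tilde\eta > 0$ small and $\eta = 1 + \tilde\eta$, the map
$G : H^2_{\mathrm{exp}, 0, \eta} \times H^1_{\mathrm{exp}, 0, \eta} \times \R \times (-\delta_0, \delta_0) \to L^2_{\mathrm{exp}, 0, \eta} \times H^1_{\mathrm{exp}, 0, \eta}$
is well-defined, smooth in $(w_U, w_V, a)$, continuous in $\delta$, and satisfies $G(w_U^0, 0, a_0; 0) = 0$, since the critical Fisher--KPP front $q_0$ solves the $\delta = 0$ problem. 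Lemma~\ref{l: transcrit lin invertible} says that the partial linearization $D_{(w,a)} G(w_U^0, 0, a_0; 0)$ is a bounded invertible operator between the indicated spaces. Applying the implicit function theorem in the form valid for maps that are $C^1$ in the unknowns and merely continuous in a parameter (the parametrized contraction-mapping version, which still produces a solution branch continuous in that parameter), I obtain, after possibly shrinking $\delta_0$, continuous functions $\delta \mapsto (w_U(\delta), w_V(\delta), a(\delta))$ with $G(w_U(\delta), w_V(\delta), a(\delta); \delta) = 0$ and $(w_U(0), w_V(0), a(0)) = (w_U^0, 0, a_0)$.

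Next I would translate this zero back to the travelling-wave system. Since $\delta^2 D \partial_{yy} - K$ is boundedly invertible on the relevant spaces (by the preconditioner estimates, ultimately \eqref{e: no turing}), the equation $G = 0$ is equivalent to $F = 0$, so the far-field/core ansatz \eqref{e: transcrit ff core ansatz} with $\mu = \delta^2$ defines a genuine solution $(U_\mathrm{fr}(\cdot;\delta), V_\mathrm{fr}(\cdot;\delta))$ of \eqref{e: transcrit TW2}. The asymptotics follow by inspecting the ansatz: functions in $H^2_{\mathrm{exp}, 0, \eta}$ and $H^1_{\mathrm{exp}, 0, \eta}$ are $H^2$, respectively $H^1$, near $-\infty$, hence tend to $0$ there by Sobolev embedding, while $\chi_-(y) \to 1$ and $\chi_+(y) \to 0$, so $(U_\mathrm{fr}, V_\mathrm{fr}) \to W_*(\delta^2)$ as $y \to -\infty$. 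As $y \to +\infty$ one has $\chi_- \equiv 0$, $\chi_+ \equiv 1$, and $w_U(y), w_V(y) = \mathrm{O}(e^{-\eta y})$ with $\eta = 1 + \tilde\eta$, giving $(U_\mathrm{fr}, V_\mathrm{fr}) = ((a(\delta) + y)e^{-y}, 0) + \mathrm{O}(e^{-(1+\tilde\eta)y})$, which is the claim with $\eta_0 = \tilde\eta$.

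It then remains to check Hypotheses~\ref{hyp: front existence} and \ref{hyp: left stability}. The $+\infty$ expansion above is of the form \eqref{e: front asymptotics} with $\nu_* = -1$ (consistent with Lemma~\ref{l: transcrit hyp 1}), $\u^0 = e_0$, $b = 1$, $\u^1 = 0$, $a = a(\delta)$, and $\eta = \tilde\eta$, and $\u_- = W_*(\delta^2)$. Exponential convergence to $\u_-$ as $y \to -\infty$, which the functional-analytic framework does not directly supply, follows from an ODE argument: by Lemma~\ref{l: transcritical stability on left} the essential spectrum of the linearization about $W_*(\delta^2)$ is strictly stable, so in particular no spatial eigenvalue at $\lambda = 0$ lies on the imaginary axis, i.e.\ $W_*(\delta^2)$ is a hyperbolic equilibrium of the first-order reformulation of \eqref{e: transcrit TW2}; a bounded orbit limiting onto it as $y \to -\infty$ lies on its unstable manifold and hence converges exponentially. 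Finally, Hypothesis~\ref{hyp: left stability} — strict negativity of $\Re(\Sigma^-)$ for the linearization about $W_*(\delta^2)$ in the frame moving with $c_* = 2$ — is exactly Lemma~\ref{l: transcritical stability on left}.

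The genuinely hard work has already been carried out in Lemma~\ref{l: transcrit lin invertible} and the preconditioner lemmas, so the corollary is mostly bookkeeping; the two points requiring care are (i) invoking the correct version of the implicit function theorem, since $G$ is only continuous — not Lipschitz or differentiable — in $\delta$, which forces one to work with the parametrized contraction principle and accept a merely continuous branch $\delta \mapsto (w_U(\delta), w_V(\delta), a(\delta))$, and (ii) upgrading the $L^2$-type decay of $w_U, w_V$ at $-\infty$ to exponential convergence of the front to $W_*(\delta^2)$, which needs the hyperbolicity argument above rather than the weighted-space machinery used for existence.
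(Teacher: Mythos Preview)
Your proof is correct and follows essentially the same approach as the paper: apply the implicit function theorem to $G$ near $(w_U^0, 0, a_0; 0)$ using Lemma~\ref{l: transcrit lin invertible}, read off the asymptotics from the far-field/core ansatz, and invoke Lemma~\ref{l: transcritical stability on left} for Hypothesis~\ref{hyp: left stability}. You are in fact more careful than the paper on two points it leaves implicit --- the parametrized-contraction form of the implicit function theorem needed for mere continuity in $\delta$, and the hyperbolicity/unstable-manifold argument for exponential convergence to $W_*(\delta^2)$ at $-\infty$ --- and both clarifications are correct.
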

	\begin{proof}
		By Lemma \ref{l: transcrit lin invertible}, we can solve $G(w_U, w_V, a; \delta) = 0$ in a neighborhood of $(w_U^0, 0, a_0; 0)$ with the implicit function theorem, which establishes the existence and asymptotics of the fronts. Stability of the essential spectrum in the wake was already proven in Lemma \ref{l: transcritical stability on left}. 
	\end{proof}
	
	It only remains to verify Hypothesis \ref{hyp: no unstable point spectrum}. Together with our regularization of the singular perturbation, we use methods developed in \cite{PoganScheel} to construct a scalar function which detects eigenvalues near the essential spectrum. Let $\mathcal{A}(\delta)$ denote the linearization
	\begin{align}
		\mathcal{A}(\delta) &= \begin{pmatrix}
			\partial_y^2 + 2 \partial_y + 1 - 2 U_\mathrm{fr} + \partial_U g_0 (U_\mathrm{fr}, V_\mathrm{fr}; \delta^2) & \partial_V g_0 (U_\mathrm{fr}, V_\mathrm{fr}; \delta^2) \\  
			\delta^2 \partial_U g_1 (U_\mathrm{fr}, V_\mathrm{fr}; \delta^2) & \delta^2 D \partial_y^2 + 2 \delta^2 \partial_y - K + \delta^2 \partial_V  g_1 (U_\mathrm{fr}, V_\mathrm{fr}; \delta^2)
		\end{pmatrix} \\
		&= :\begin{pmatrix}
			\mathcal{A}_{11}(\delta) & \mathcal{A}_{12}(\delta) \\
			\mathcal{A}_{21}(\delta) & \mathcal{A}_{22} (\delta)
		\end{pmatrix}
	\end{align}
	about the front $(U_\mathrm{fr}, V_\mathrm{fr})^T$. As $y \to +\infty$, this limits on the far-field linearization
	\begin{align}
		\mathcal{A}_+(\delta) = \begin{pmatrix}
			\partial_y^2 + 2 \partial_y + 1 & 0 \\
			0 & \delta^2 D \partial_y^2 + 2 \delta^2 \partial_y - K 
		\end{pmatrix}. 
	\end{align}
	The limiting eigenvalue problem $(\mathcal{A}_+(\delta) - \gamma^2) (U,V)^T = 0$ then admits solutions
	\begin{align}
		\begin{pmatrix} U(y, \gamma) \\ V(y, \gamma) \end{pmatrix} = \begin{pmatrix} 
			e^{-(1\pm\gamma) y} \\ 0
		\end{pmatrix}
	\end{align}
	associated to the pinched double root at $(\lambda, \nu) = (0, -1)$. 
	
	The full eigenvalue problem has the form
	\begin{align}
		(\mathcal{A}_{11}(\delta) - \gamma^2) U + \mathcal{A}_{12}(\delta) V &= 0 \\
		\mathcal{A}_{21}(\delta) U + (\mathcal{A}_{22}(\delta) - \gamma^2) V&=0.
	\end{align}
	Fix $\eta = 1 + \tilde{\eta}$ with $\tilde{\eta}$ small. Modifying the argument of Lemma \ref{l: preconditioner regularity} to include $\gamma$-dependence, we see that $\delta$ and $\gamma$ sufficiently small, the operator $(\mathcal{A}_{22}(\delta) - \gamma^2)$ is invertible with inverse uniformly bounded from $L^2_{\mathrm{exp}, 0, \eta} \to L^2_{\mathrm{exp}, 0, \eta}$, and so we can solve the second equation for $V$ in terms of $U$, obtaining
	\begin{align}
		V = - (\mathcal{A}_{22}(\delta) - \gamma^2)^{-1} \mathcal{A}_{21}(\delta) U. 
	\end{align}
	Inserting this into the first equation, we obtain the nonlocal generalized eigenvalue problem
	\begin{align}
		(\mathcal{A}_{11}(\delta) - \gamma^2) U - \mathcal{A}_{12}(\delta) (\mathcal{A}_{22}(\delta) - \gamma^2)^{-1} \mathcal{A}_{21}(\delta) U = 0. \label{e: transcrit U eigenvalue problem} 
	\end{align}
	Since $g_1$ is quadratic, every term in $\partial_U g_1 (U_\mathrm{fr}, V_\mathrm{fr}; \delta^2)$ contains a factor of $U_\mathrm{fr}$ or $V_\mathrm{fr}$, and hence is exponentially decaying on the right at least as fast as $ye^{-y}$. It follows that
	\begin{align}
		\| \mathcal{A}_{21}(\delta) \|_{L^2_{\mathrm{exp}, 0, 1} \to L^2_{\mathrm{exp}, 0, \eta}} \leq C |\delta|^2. \label{e: transcrit A21 estimate}
	\end{align} 
	Also, $\partial_V g_0 (U_\mathrm{fr}, V_\mathrm{fr}; \delta^2)$ is bounded in space, and hence $\| \mathcal{A}_{12}(\delta) \|_{L^2_{\mathrm{exp}, 0, \eta} \to L^2_{\mathrm{exp}, 0, \eta}} \leq C$, and so we conclude
	\begin{multline*}
		\|  \mathcal{A}_{12}(\delta) (\mathcal{A}_{22}(\delta) - \gamma^2)^{-1} \mathcal{A}_{21}(\delta) U \|_{L^2_{\mathrm{exp}, 0, \eta}} \leq \| \mathcal{A}_{12}(\delta)\|_{L^2_{\mathrm{exp}, 0, \eta} \to L^2_{\mathrm{exp}, 0, \eta}} \| (\mathcal{A}_{22}(\delta) - \gamma^2)^{-1} \|_{L^2_{\mathrm{exp}, 0, \eta} \to L^2_{\mathrm{exp}, 0, \eta}} \\
		\hfill \cdot \| \mathcal{A}_{21}(\delta) \|_{L^2_{\mathrm{exp}, 0, 1} \to L^2_{\mathrm{exp}, 0, \eta}} \| U \|_{L^2_{0,1}} \\
		\leq C |\delta|^2 \| U \|_{L^2_{0, 1}}
	\end{multline*}
	for all $\delta$ sufficiently small. Hence we may view this term as a perturbation of the principal eigenvalue problem $(\mathcal{A}_{11}(\delta) - \gamma^2) U = 0$. 
	
	To solve this eigenvalue problem, we make the far-field/core ansatz
	\begin{align}
		U_\mathrm{eig}(y, \gamma) = w(y) + \alpha \chi_+ (y) e^{-(1+\gamma) y} =: w(y) + \alpha e_+(y, \gamma).
	\end{align}
	Inserting this ansatz into \eqref{e: transcrit U eigenvalue problem}, we obtain an equation
	\begin{align}
		\mathcal{F}(w, \alpha; \gamma, \delta) = 0,
	\end{align}
	where
	\begin{align}
		\mathcal{F}(w, \alpha; \gamma, \delta) = (\mathcal{A}_{11}(\delta) - \gamma^2) U_\mathrm{eig} - \mathcal{A}_{12}(\delta) (\mathcal{A}_{22}(\delta) - \gamma^2)^{-1} \mathcal{A}_{21}(\delta) U_\mathrm{eig}. 
	\end{align}
	\begin{lemma}\label{l: transcrit evans fcn regularity}
		Fix $\gamma_0$ and $\delta_0$ sufficiently small. The mapping $\mathcal{F}: H^2_{\mathrm{exp}, 0, \eta} \times \C \times B(0,\gamma_0) \times (-\delta_0, \delta_0) \to L^2_{\mathrm{exp}, 0, \eta}$ is well-defined, linear in $w$ and $\alpha$, analytic in $\gamma$, and continuous in $\delta$. Moreover, any $\gamma^2$ to the right of the essential spectrum of $\mcl(\delta) = \omega_{0,1} A(\delta) \omega_{0,1}^{-1}$ is an eigenvalue of $\mcl(\delta)$ if and only if there exist $(w, \alpha) \in H^2_{\mathrm{exp}, 0, \eta} \times \C$ such that $\mathcal{F}(w, \alpha; \gamma, \delta) = 0$ with $\Re \gamma > 0$. 
	\end{lemma}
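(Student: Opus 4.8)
The plan is to verify, in order, that $\mathcal{F}$ maps into $L^2_{\mathrm{exp}, 0, \eta}$, is linear in $(w,\alpha)$, analytic in $\gamma$, and continuous in $\delta$, and then to identify its kernel with the eigenspace of $\mcl(\delta)$ at $\lambda=\gamma^2$. As in the existence argument, the singular perturbation is handled by keeping the $\mathcal{A}_{22}$-block inverted throughout.

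\emph{Well-definedness and regularity of $\mathcal{F}$.} The structural point is that $e_+(y,\gamma)=\chi_+(y)e^{-(1+\gamma)y}$ lies, up to the compactly supported terms generated by $\chi_+'$, in the kernel of the asymptotic operator $\partial_y^2+2\partial_y+1-\gamma^2$, since $\nu=-(1+\gamma)$ solves $\nu^2+2\nu+1=\gamma^2$. Writing $\mathcal{A}_{11}(\delta)=(\partial_y^2+2\partial_y+1)+p(\cdot;\delta)$ with $p=-2U_\mathrm{fr}+\partial_U g_0(U_\mathrm{fr},V_\mathrm{fr};\delta^2)$, one checks that $p$ is bounded and, because $g_0$ is at least quadratic and $U_\mathrm{fr}(y)\sim (a(\delta)+y)e^{-y}$ by Corollary \ref{c: transcrit existence}, decays like $(1+|y|)e^{-y}$ as $y\to+\infty$. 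Hence $(\mathcal{A}_{11}(\delta)-\gamma^2)e_+$ is a compactly supported term plus $p\,e_+=\mathrm{O}((1+|y|)e^{-(2+\gamma)y})$, both in $L^2_{\mathrm{exp},0,\eta}$ once $\tilde\eta+\gamma_0<1$, while $(\mathcal{A}_{11}(\delta)-\gamma^2)w\in L^2_{\mathrm{exp},0,\eta}$ for $w\in H^2_{\mathrm{exp},0,\eta}$ since $p$ is bounded. For the nonlocal term I would reuse the estimates already set up: the $\gamma$-dependent variant of Lemma \ref{l: preconditioner regularity} (valid for $\gamma,\delta$ small by \eqref{e: no turing} after the rescaling $\kappa=\delta k$, since $-D\kappa^2-K-\gamma^2$ stays uniformly invertible) keeps $(\mathcal{A}_{22}(\delta)-\gamma^2)^{-1}$ uniformly bounded on $L^2_{\mathrm{exp},0,\eta}$, and a slight extension of \eqref{e: transcrit A21 estimate} --- the surplus decay of $\mathcal{A}_{21}(\delta)$ absorbing the marginally weaker decay of $e_+$ --- gives $\|\mathcal{A}_{12}(\delta)(\mathcal{A}_{22}(\delta)-\gamma^2)^{-1}\mathcal{A}_{21}(\delta)U_\mathrm{eig}\|_{L^2_{\mathrm{exp},0,\eta}}\leq C|\delta|^2(\|w\|_{H^2_{\mathrm{exp},0,\eta}}+|\alpha|)$. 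Linearity in $(w,\alpha)$ is then immediate; analyticity in $\gamma$ follows by Taylor expanding $e^{-\gamma y}$ (the $e^{-y}$ decay of $p$ controls the weight loss uniformly on $B(0,\gamma_0)$) together with analyticity of $(\mathcal{A}_{22}(\delta)-\gamma^2)^{-1}$ on $B(0,\gamma_0)$; and continuity in $\delta$ follows from continuity of $U_\mathrm{fr},V_\mathrm{fr}$ in $\delta$ (Corollary \ref{c: transcrit existence}) together with continuity of the operators $\mathcal{A}_{ij}(\delta)$ and of $(\mathcal{A}_{22}(\delta)-\gamma^2)^{-1}$, via the resolvent identity and Lemma \ref{l: preconditioner regularity}.

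\emph{Identification of the kernel.} Since $\mcl(\delta)=\omega_{0,1}\mathcal{A}(\delta)\omega_{0,1}^{-1}$, a value $\lambda=\gamma^2$ is an eigenvalue of $\mcl(\delta)$ on $H^2(\R)$ exactly when $(\mathcal{A}(\delta)-\gamma^2)(U,V)^T=0$ has a nontrivial solution with $\omega_{0,1}(U,V)^T\in H^2$. Invertibility of $\mathcal{A}_{22}(\delta)-\gamma^2$ on this space forces $V=-(\mathcal{A}_{22}(\delta)-\gamma^2)^{-1}\mathcal{A}_{21}(\delta)U$ (determined by $U$, with at least its decay), so $U\neq 0$ must solve the reduced problem \eqref{e: transcrit U eigenvalue problem} with $\omega_{0,1}U\in H^2$. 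Given such a $U$, I would use that on $L^2_{\mathrm{exp},0,\eta}$ the reduced operator is an exponentially localized perturbation of $\partial_y^2+2\partial_y+1-\gamma^2$ (the potential part $\mathrm{O}(1)$ but $\mathrm{O}(e^{-y})$-localized, the nonlocal part $\mathrm{O}(\delta^2)$), which for $\Re\gamma>0$ admits an exponential dichotomy whose only exponentially decaying mode with $\omega_{0,1}$-weight in $H^2$ is $\sim e^{-(1+\gamma)y}$ (the other spatial root $-(1-\gamma)$ produces a mode outside this weighted class). Hence $U(y)=\alpha e^{-(1+\gamma)y}(1+o(1))$ as $y\to+\infty$ for a unique $\alpha\in\C$, and $w:=U-\alpha e_+$ decays strictly faster than $e^{-(1+\tilde\eta)y}$ --- in fact like $e^{-(2+\gamma)y}$ up to polynomial factors, plus a compactly supported contribution on $\mathrm{supp}(1-\chi_+)$ --- so $w\in H^2_{\mathrm{exp},0,\eta}$ provided $\tilde\eta<1$; thus $\mathcal{F}(w,\alpha;\gamma,\delta)=0$ with $(w,\alpha)\neq 0$. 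Conversely, if $\mathcal{F}(w,\alpha;\gamma,\delta)=0$ for some $(w,\alpha)\neq 0$ with $\Re\gamma>0$, then $U:=w+\alpha e_+$ satisfies $\omega_{0,1}U\in H^2$ --- crucially because $\omega_{0,1}e_+=\chi_+e^{-\gamma y}\in H^2$ precisely when $\Re\gamma>0$ --- solves \eqref{e: transcrit U eigenvalue problem}, and is nonzero because $e_+\notin H^2_{\mathrm{exp},0,\eta}$ when $\alpha\neq 0$ and $\gamma_0<\tilde\eta$; together with the associated $V$ this yields a nontrivial eigenfunction of $\mcl(\delta)$ at $\gamma^2$. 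Finally, inside $B(0,\gamma_0)$ the condition ``$\gamma^2$ lies to the right of the essential spectrum of $\mcl(\delta)$'' coincides with $\Re\gamma>0$, since by Lemma \ref{l: transcrit hyp 1} that essential spectrum meets the origin along the half-line $(-\infty,0]$.

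\emph{Main obstacle.} The hard part will be the two places where the asymptotic structure must be controlled uniformly: first, upgrading Lemma \ref{l: preconditioner regularity} to carry the spectral parameter $\gamma^2$, so that $(\mathcal{A}_{22}(\delta)-\gamma^2)^{-1}$ is uniformly bounded on the exponentially weighted spaces for all $\gamma\in B(0,\gamma_0)$ and small $\delta$ --- this is where \eqref{e: no turing} enters, keeping $-D\kappa^2-K-\gamma^2$ invertible with uniform bounds after $\kappa=\delta k$; and second, the asymptotic bookkeeping in the forward half of the identification, namely that a solution of the reduced problem with $\omega_{0,1}U\in H^2$ genuinely splits as $w+\alpha e_+$ with $w$ in the strictly more localized space $H^2_{\mathrm{exp},0,\eta}$ --- the step that really invokes the exponential dichotomy from Hypothesis \ref{hyp: PDR} and requires care with the slightly over-strong weight $\tilde\eta$. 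The remaining verifications are routine given the estimates already assembled in this section.
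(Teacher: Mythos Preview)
Your proposal is correct and follows essentially the same approach as the paper: exponential localization from the fact that $e_+$ solves the asymptotic problem together with the extra decay of $\mathcal{A}_{21}$ via \eqref{e: transcrit A21 estimate}, analyticity from the resolvent $(\mathcal{A}_{22}(\delta)-\gamma^2)^{-1}$, and the eigenvalue equivalence from the far-field/core splitting using the exponential dichotomy at $+\infty$. The paper's proof is much terser, outsourcing the analyticity in $\gamma$ and the identification-of-kernel step to \cite[Proposition 5.11 and its proof, step 6]{PoganScheel}, whereas you spell these arguments out directly; the content is the same.
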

	\begin{proof}
		That $\mathcal{F}$ preserves exponential localization follows from the fact that $\chi_+ e^{-(1+\gamma)y}$ solves $(\mathcal{A}_+(\delta) - \gamma^2) u = 0$ and that the nonlocal perturbation gains exponential localization by \eqref{e: transcrit A21 estimate}. Continuity in $\delta$ also follows from \eqref{e: transcrit A21 estimate}. Analyticity in $\gamma$ follows as in \cite[Proposition 5.11]{PoganScheel}, with the additional observation that $(\mathcal{A}_{22}(\delta) - \gamma^2)^{-1}$ is analytic in $\gamma^2$ in $L^2_{\mathrm{exp}, 0, \eta}$ by standard spectral theory. Equivalence to the standard eigenvalue problem follows as in \cite[proof of Proposition 5.11, step 6]{PoganScheel}. 
	\end{proof}
	
	We now perform a Lyapunov-Schmidt reduction, decomposing the eigenvalue problem into an invertible infinite dimensional part and a finite dimensional part which detects eigenvalues. We let $P: L^2_{\mathrm{exp}, 0, \eta} \to \Rg \mathcal{A}_{11}(0) \subset L^2_{\mathrm{exp}, 0, \eta}$ denote the $L^2$-orthogonal projection onto the range of $\mathcal{A}_{11}(0)$. It follows from Lemma \ref{l: transcrit hyp 1} that $\mathcal{A}_{11}(0) = \mathcal{A}_\mathrm{kpp} : H^2_{\mathrm{exp}, 0, \eta} \to L^2_{\mathrm{exp}, 0, \eta}$ is Fredholm with index $-1$. The proof of Lemma \ref{l: transcrit lin invertible} implies, in particular, that $\mathcal{A}_{11}(0)$ has trivial kernel and one-dimensional co-kernel, and we let $\ker \mathcal{A}_{11}(0)^* = \spn (\phi)$ for some function $\phi \in L^2_{\mathrm{exp}, 0, -\eta}$. We can then decompose the eigenvalue problem \eqref{e: transcrit U eigenvalue problem} as 
	\begin{align}
		\begin{cases}
			P \mathcal{F} (w, \alpha; \gamma, \delta) &= 0, \\
			\langle \mathcal{F}(w, \alpha;\gamma, \delta), \phi \rangle &= 0. 
		\end{cases} \label{e: transcrit eigenvalue decomposition}
	\end{align}
	This system has a trivial solution $(w, \alpha; \gamma, \delta) = (0, 0; 0, 0)$. The linearization of the first equation about this trivial solution is $P \mathcal{A}_{11}(0)$, which is invertible by construction, and so by the implicit function theorem we can solve the first equation for $w(\alpha; \gamma, \delta)$. Since the equation is linear in $\alpha$ and the implicit function theorem guarantees a unique solution in a neighborhood of the origin, we find that this solution $w$ must have the form
	\begin{align}
		w (\alpha; \gamma, \delta) = \alpha \tilde{w}(\gamma, \delta). 
	\end{align}
	Inserting this into the second equation of \eqref{e: transcrit eigenvalue decomposition} and eliminating the common factor of $\alpha$ in every term, we find a reduced scalar equation
	\begin{align}
		E(\gamma, \delta) := \langle (\mathcal{A}_{11}(\delta) - \gamma^2) (\tilde{w} + e_+) - \mathcal{A}_{12} (\delta) (\mathcal{A}_{22}(\delta) - \gamma^2)^{-1} \mathcal{A}_{21}(\delta) (\tilde{w} + e_+), \phi \rangle.
	\end{align}
	Lemma \ref{l: transcrit evans fcn regularity} implies that $E(\gamma, \delta)$ is analytic in $\gamma$ and continuous in $\delta$, and that $\mathcal{A}(\delta)$ has an eigenvalue $\gamma^2$ to the right of its essential spectrum if and only if $E(\gamma, \delta) = 0$ with $\Re \gamma > 0$. 
	
	\begin{prop}\label{p: transcrit point spectrum}
		For $\delta > 0$ sufficiently small, the operator $\mcl(\delta) = \omega_{0,1}A(\delta) \omega_{0,1}^{-1} : H^2 \times H^2 \to L^2 \times L^2$ has no eigenvalues with $\Re \lambda \geq 0$, and there is no bounded solution to $\mcl(\delta) u = 0$. That is, for $\delta > 0$ sufficiently small, \eqref{e: transcritical scaled 2} satisfies Hypothesis \ref{hyp: no unstable point spectrum}. 
	\end{prop}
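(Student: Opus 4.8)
The plan is to split the closed half-plane $\{\Re\lambda \geq 0\}$ into a fixed small disk around $\lambda = 0$, treated via the reduced function $E(\gamma,\delta)$ built above, and its complement, treated by a direct comparison with the scalar Fisher--KPP eigenvalue problem that arises at $\delta = 0$.

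\emph{Near the origin.} I would start from Lemma \ref{l: transcrit evans fcn regularity}: for $|\gamma| \leq \gamma_0$ with $\Re\gamma > 0$, a point $\gamma^2$ lying to the right of the essential spectrum is an eigenvalue of $\mcl(\delta)$ exactly when $E(\gamma,\delta) = 0$, and extending the same far-field/core correspondence to $\gamma = 0$, a bounded solution of $\mcl(\delta)u = 0$ exists exactly when $E(0,\delta) = 0$. At $\delta = 0$ the coupling operator $\mathcal{A}_{21}(0)$ vanishes, so the nonlocal term drops out of $E$ and $E(\gamma,0)$ reduces to the Evans-type function associated to the critical Fisher--KPP front $q_0$, i.e. to $\omega_{0,1}\mathcal{A}_\mathrm{kpp}\omega_{0,1}^{-1}$. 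Since $q_0$ satisfies Hypotheses \ref{hyp: PDR}--\ref{hyp: no unstable point spectrum} (see \cite{AveryGarenaux}), that operator has no eigenvalue with $\Re\lambda \geq 0$ and no bounded solution in its kernel, so $E(\gamma,0) \neq 0$ on the compact ``pie slice'' $S = \{|\gamma| \leq \gamma_0,\ |\arg\gamma| \leq \pi/4\}$, which is exactly the preimage of $\{\Re\lambda \geq 0,\ |\lambda| \leq \gamma_0^2\}$ under $\gamma \mapsto \gamma^2$. By continuity of $E$ in $\delta$, uniform in $\gamma$ on compacts, together with $\inf_{S}|E(\cdot,0)| > 0$, I would conclude $E \neq 0$ on $S$ for $\delta$ small; hence $\mcl(\delta)$ has no eigenvalue with $\Re\lambda \geq 0$ and $|\lambda| \leq \gamma_0^2$, and no bounded solution of $\mcl(\delta)u = 0$.

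\emph{Away from the origin.} Here I would fix $\rho = \gamma_0^2/2$, take $\lambda$ with $\Re\lambda \geq 0$ and $|\lambda| \geq \rho$, and work on $L^2 \times L^2$ after conjugating by $\omega_{0,1}$. By \eqref{e: no turing}, after the rescaling $\kappa = \delta k$ the block $\mathcal{A}_{22}(\delta) - \lambda$ is invertible with inverse bounded uniformly in $\delta$ small and in $\lambda$ with $\Re\lambda \geq 0$; eliminating $V$ as in the construction of $E$ reduces the eigenvalue problem to $(\omega_{0,1}\mathcal{A}_{11}(\delta)\omega_{0,1}^{-1} - \lambda)U = \mathcal{N}(\lambda,\delta)U$ with $\|\mathcal{N}(\lambda,\delta)\|_{L^2 \to L^2} = \mathrm{O}(\delta^2)$ by \eqref{e: transcrit A21 estimate}. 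Moreover $\omega_{0,1}\mathcal{A}_{11}(\delta)\omega_{0,1}^{-1} - \omega_{0,1}\mathcal{A}_\mathrm{kpp}\omega_{0,1}^{-1}$ is multiplication by $-2(U_\mathrm{fr} - q_0) + \partial_U g_0(U_\mathrm{fr},V_\mathrm{fr};\delta^2)$, whose $L^\infty$ norm tends to $0$ as $\delta \to 0$ (Corollary \ref{c: transcrit existence} gives $U_\mathrm{fr} \to q_0$, $V_\mathrm{fr} \to 0$ uniformly, and $\partial_U g_0(U_\mathrm{fr},V_\mathrm{fr};\delta^2) = \mathrm{O}(\delta^2) + \mathrm{O}(\|V_\mathrm{fr}\|_\infty)$). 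Since $\omega_{0,1}\mathcal{A}_\mathrm{kpp}\omega_{0,1}^{-1}$ has no spectrum in $\{\Re\lambda \geq 0\}\setminus\{0\}$ and its resolvent decays as $|\lambda| \to \infty$ (cf. \cite{AveryGarenaux}), the resolvent $(\omega_{0,1}\mathcal{A}_\mathrm{kpp}\omega_{0,1}^{-1} - \lambda)^{-1}$ is bounded uniformly on $\{\Re\lambda \geq 0,\ |\lambda| \geq \rho\}$, so a Neumann series shows $\omega_{0,1}\mathcal{A}_{11}(\delta)\omega_{0,1}^{-1} - \lambda - \mathcal{N}(\lambda,\delta)$ is invertible there for $\delta$ small; thus $U = 0$, hence $V = 0$, and $\mcl(\delta)$ has no eigenvalue with $\Re\lambda \geq 0$ and $|\lambda| \geq \rho$. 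Combining the two ranges gives Hypothesis \ref{hyp: no unstable point spectrum}.

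The hard part will be the second step, because the perturbation in $\delta$ is singular (the $V$-block loses two derivatives at $\delta = 0$), so one cannot simply call $\mcl(\delta)$ a small perturbation of $\mcl(0)$. The way around this is the same regularization used for existence: the coupling $\mathcal{A}_{21}(\delta) = \mathrm{O}(\delta^2)$ of \eqref{e: transcrit A21 estimate} and the uniform (in $\delta$ and $\lambda$) invertibility of $\mathcal{A}_{22}(\delta) - \lambda$ supplied by \eqref{e: no turing}, which together collapse the system to a genuine perturbation of the scalar Fisher--KPP eigenvalue problem, uniformly all the way down to the edge of the essential spectrum at $\lambda = 0$.
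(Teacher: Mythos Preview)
Your proposal is correct and follows essentially the same two-region strategy as the paper: use the reduced function $E(\gamma,\delta)$ near $\lambda=0$, and treat the rest of $\{\Re\lambda\geq 0\}$ as a regular perturbation of the scalar Fisher--KPP eigenvalue problem via the Schur complement reduction. The only minor difference is that the paper pins down nonvanishing at the single point $E(0,0)\neq 0$ (citing an explicit computation from \cite{AveryGarenaux}) and then invokes continuity, whereas you argue $E(\gamma,0)\neq 0$ on the full pie slice $S$ at once; your ``away from the origin'' argument is also spelled out in more detail than the paper's, which simply cites \cite{AveryGarenaux} for the resolvent bounds and the perturbation step.
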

	\begin{proof}
		Eigenvalues bifurcating out of the essential spectrum are tracked by zeros of $E(\gamma, \delta)$. We compute
		\begin{align}
			E(0, 0) = \langle \mathcal{A}_{11}(0) (\tilde{w}(0,0) + e_+(\cdot, 0)), \phi \rangle. 
		\end{align}
		It was shown in \cite[proof of Lemma 4.6]{AveryGarenaux} that $E(0,0) \neq 0$, and hence $E(\gamma, \delta)$ is nonzero for $\gamma, \delta$ small. This also implies that $\mcl(0) u = 0$ has no bounded solutions, as the existence of a bounded solution would imply $E(0,0) = 0$. 
		
		Away from the essential spectrum, which touches the imaginary axis only at the origin and is otherwise stable, the eigenvalue problem \eqref{e: transcrit U eigenvalue problem} is a regular perturbation of the Fisher-KPP eigenvalue problem $(\mathcal{A}_{11}(0) - \lambda) u = 0$, which has no eigenvalues with $\Re \lambda \geq 0$, and hence there are no eigenvalues for the full problem with $\Re \lambda \geq 0$ by standard spectral perturbation theory. See e.g. \cite[proof of Theorem 2]{AveryGarenaux} for further details. 
	\end{proof}
	
	Theorem \ref{t: transcritical} follows from Lemma \ref{l: transcrit hyp 1}, Corollary \ref{c: transcrit existence}. and Proposition \ref{p: transcrit point spectrum}. Applying Theorem \ref{t: selection}, we obtain the following description of invasion dynamics in the original system \eqref{e: transcritical 2}.
	
	\begin{corollary}\label{c: transcrit}
		Consider \eqref{e: transcritical 2} with $\mu > 0$ small. There exist open classes of steep initial data which evolve into front-like profiles propagating with the linear spreading speed $c_* = 2 \sqrt{\mu}$. 
	\end{corollary}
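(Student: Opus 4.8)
The plan is to combine Theorem~\ref{t: transcritical} with the abstract selection result Theorem~\ref{t: selection} and then undo the rescaling $y = \sqrt{\mu}\,x$, $\tau = \mu t$, $u = \mu U$, $v = \mu V$. By Theorem~\ref{t: transcritical}, for $\mu > 0$ sufficiently small the rescaled system \eqref{e: transcritical scaled 2} satisfies Hypotheses~\ref{hyp: PDR} through~\ref{hyp: no unstable point spectrum}, with $c_* = 2$ and $\eta_* = 1$, and the associated critical front is the one constructed in Corollary~\ref{c: transcrit existence}, whose $U$-component is a small perturbation of the Fisher--KPP front $q_0$ and whose $V$-component is $\mathrm{O}(\sqrt{\mu})$. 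Theorem~\ref{t: selection} then yields, for each $\eps > 0$ and each fixed $r \in (2, \tfrac{17}{8})$, an open set $\mathcal{U}_\eps$ of initial data for \eqref{e: transcritical scaled 2} — open in the topology of the weighted sup-norm $\| \g \| = \| \rho_{0,r}\,\omega_*\, \g \|_{L^\infty}$ — which contains some steep data, and such that the corresponding solution $(U,V)$ converges in the weighted norm, up to the logarithmically corrected shift $\sigma(\tau) = 2\tau - \tfrac{3}{2}\log\tau + y_\infty$, to the critical front $\q_*$.

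Second, I would transport this statement back to the original variables. The substitution above is a linear change of coordinates in space, time, and amplitude, so it maps solutions of \eqref{e: transcritical scaled 2} bijectively to solutions of \eqref{e: transcritical 2}. Under it, the profile $\q_*(y - \sigma(\tau))$ becomes $\mu\,\q_*\big(\sqrt{\mu}(x - 2\sqrt{\mu}\,t) + \tfrac{3}{2}\log t + \mathrm{const}\big)$; factoring $\sqrt{\mu}$ out of the argument exhibits this, to leading order, as a front of amplitude $\mathrm{O}(\mu)$ and spatial width $\mathrm{O}(\mu^{-1/2})$ traveling with speed $2\sqrt{\mu}$, carrying the logarithmic delay $\tfrac{3}{2\sqrt{\mu}}\log t$ predicted by the original leading-edge decay rate $\eta_*^{\mathrm{orig}} = \sqrt{\mu}$. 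Defining $\widetilde{\mathcal{U}}_\eps$ to be the image of $\mathcal{U}_\eps$ under $U_0 \mapsto \mu\, U_0(\sqrt{\mu}\,\cdot)$ then produces the desired class of steep initial data for \eqref{e: transcritical 2} whose solutions evolve into such front-like profiles.

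Third, I would verify that the two qualitative descriptors in the statement survive the change of variables. The spatial dilation $x \mapsto \sqrt{\mu}\,x$ maps compactly supported (more generally, steep) functions to compactly supported functions and the scalar multiplication by $\mu$ is harmless, so $\widetilde{\mathcal{U}}_\eps$ still contains steep data. Moreover, composing the argument of a fixed algebraic-times-exponential weight with the constant dilation $\sqrt{\mu}$ alters the weighted $L^\infty$ norm only by $\mu$-dependent constants, so $U_0 \mapsto \mu\, U_0(\sqrt{\mu}\,\cdot)$ is a topological isomorphism between the relevant weighted spaces; hence $\widetilde{\mathcal{U}}_\eps$ is open in the corresponding topology for \eqref{e: transcritical 2}.

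All of the genuine analytic content is already packaged in Theorem~\ref{t: transcritical} and Theorem~\ref{t: selection}, so the only thing that really needs care here is the bookkeeping of the rescaling — tracking how the speed, amplitude, logarithmic shift, and weights transform — together with the (elementary) observation that steepness and openness are preserved because the transformation is a linear dilation composed with a scalar multiplication.
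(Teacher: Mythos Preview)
Your proposal is correct and follows the same approach as the paper: apply Theorem~\ref{t: selection} to the rescaled system (whose hypotheses are verified by Theorem~\ref{t: transcritical}) and then undo the rescaling $y=\sqrt{\mu}\,x$, $\tau=\mu t$, $(u,v)=\mu(U,V)$ to read off the speed $c_*=2\sqrt{\mu}$ in the original variables. The paper states this corollary without an explicit proof, simply noting that one applies Theorem~\ref{t: selection}; you have supplied the bookkeeping (transformation of speed, logarithmic shift, weights, steepness, and openness) that the paper leaves implicit.
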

	
	\section{Saddle-node bifurcation}\label{s: saddle node}
	We now assume the reaction kinetics undergo a saddle-node bifurcation, with general form
	\begin{align}
		u_t &= u_{xx} + \mu - u^2 + f_0(u, v; \mu) \nonumber \\
		v_t &= D v_{xx} - K v + f_1 (u, v; \mu), \label{e: saddle node 2}
	\end{align}
	with $\mu > 0$, $(u, v) \in \R \times \R^{n-1}$, and positive matrices $D, K \in \R^{n-1} \times \R^{n-1}$ satisfying \eqref{e: no turing}. We assume $f_0$ and $f_1$ are smooth, with 
	\begin{align*}
		f_0(u, v;\mu) &= \mathrm{O}\left(  \mu u, \mu |v|, u |v|, |v|^2, \mu |v|, u^3 \right), \\
		f_1 (u, v; \mu) &= \mathrm{O}\left(\mu |v|, u^2, |v|^2, u |v|\right)
	\end{align*}
	as $u, |v|, \mu \to 0$. 
	Introducing the rescaled variables
	\begin{align}
		y = \mu^{1/4} x, \quad t = \mu^{1/2} \tau, \quad U(y,\tau) = \mu^{-1/2} u(x,t), \quad V(y,\tau) = \mu^{-1/2} v(x,t),
	\end{align}
	we find
	\begin{align*}
		U_\tau &= U_{yy} + 1 - U^2 + g_0(U, V; \mu) \\
		\mu^{1/2} V_\tau &= \mu^{1/2} D V_{yy} - K V + \mu^{1/2} g_1(U, V; \mu), 
	\end{align*}
	where
	\begin{align*}
		g_0 (U, V; \mu) &= \frac{1}{\mu} f_0 (\mu^{1/2} U, \mu^{1/2} V; \mu), \\
		g_1 (U, V; \mu) &= \frac{1}{\mu} f_1 (\mu^{1/2} U, \mu^{1/2} V; \mu)
	\end{align*}
	are smooth in $U, V$, and $\sqrt{\mu}$. The leading order equation is now $U_\tau = U_{yy} + 1 - U^2$, which has $U = 1$ and $U = -1$ as stable and unstable equilibria, respectively. Making the change of variables $U = W - 1$, we then find
	\begin{align}
		W_\tau &= W_{yy} + 2 W - W^2 + g_0 (W-1, V; \mu) \nonumber\\
		\mu^{1/2} V_\tau &= \mu^{1/2} D V_{yy} - K V + \mu^{1/2} g_1(W-1, V; \mu). \label{e: saddle-node scaled 2}
	\end{align}
	
	\begin{thmlocal}\label{t: saddle-node}
		For $\mu > 0$ sufficiently small, the system \eqref{e: saddle-node scaled 2} satisfies Hypotheses \ref{hyp: PDR} through \ref{hyp: no unstable point spectrum}. 
	\end{thmlocal}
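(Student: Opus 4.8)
The plan is to follow the transcritical case of Section~\ref{s: transcrit} essentially line for line, with two cosmetic changes. First, the singular parameter is now $\delta = \mu^{1/4}$, so that $\delta^{2} = \mu^{1/2}$ is precisely the small coefficient multiplying the $V$-derivatives in \eqref{e: saddle-node scaled 2}, the same role $\mu = \delta^{2}$ played before; in particular the preconditioner estimates of Lemma~\ref{l: preconditioner regularity} apply unchanged, since their proof is valid for all $|\eta|\le 2$. Second, the shift $U = W - 1$, which centers the problem on the invaded state $u\equiv -\sqrt{\mu}$, produces the coefficient $2$ in front of the linear term of the $W$-equation, so the leading-order profile at $\delta = 0$ is the \emph{rescaled} critical Fisher--KPP front $q_{0}(y) = 2\,q_{\mathrm{kpp}}(\sqrt{2}\,y)$, where $q_{\mathrm{kpp}}$ solves $q'' + 2q' + q - q^{2} = 0$ with $q(-\infty)=1$, $q(+\infty)=0$. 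One checks directly that $q_{0}$ solves $q_{0}'' + 2\sqrt{2}\,q_{0}' + 2q_{0} - q_{0}^{2} = 0$ with $q_{0}(-\infty)=2$, $q_{0}(+\infty)=0$ and, after a translation normalizing the prefactor, $q_{0}(y)\sim(a_{0}+y)e^{-\sqrt{2}\,y}$ as $y\to\infty$. Correspondingly the linear spreading speed becomes $c_{*}=2\sqrt{2}$ and the leading-edge rate $\eta_{*}=\sqrt{2}$, and throughout one takes $\eta = \sqrt{2}+\tilde{\eta}$ with $\tilde{\eta}$ small and works in $H^{k}_{\mathrm{exp},0,\eta}$.

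With these substitutions, Hypothesis~\ref{hyp: PDR} follows as in Lemma~\ref{l: transcrit hyp 1}: the dispersion relation of \eqref{e: saddle-node scaled 2} linearized about $0$ in a frame of speed $c$ factors as $(\nu^{2}+c\nu+2)\det(\delta^{2}D\nu^{2}+c\delta^{2}\nu I - K + \mathrm{O}(\delta^{2}) - \lambda I)$; the scalar factor has a simple double root at $(\lambda,\nu)=(0,-\sqrt{2})$ exactly when $c=c_{*}=2\sqrt{2}$, and the matrix factor has no marginal or unstable roots after the rescaling $\kappa=\sqrt{\mu}\,k$, $\tilde{\lambda}=\mu\lambda$ by \eqref{e: no turing}. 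Likewise, the spatially uniform equilibria reduce at $\mu=0$ to $V=0$ and $2W-W^{2}=0$; the nonzero root $W=2$ has $\partial_{W}(2W-W^{2})|_{W=2}=-2\neq 0$, so the implicit function theorem yields a branch $W_{*}(\delta^{4})=(2,0)^{T}+\mathrm{O}(\delta^{2})$ whose linearization has strictly stable essential spectrum by \eqref{e: no turing}, giving the analog of Lemma~\ref{l: transcritical stability on left}.

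For Hypothesis~\ref{hyp: front existence}, I would make the far-field/core ansatz $(W,V)^{T}=W_{*}(\delta^{4})\chi_{-}+(w_{W},w_{V})^{T}+e_{0}\chi_{+}(a+y)e^{-\sqrt{2}\,y}$ for the traveling-wave equation at speed $c_{*}=2\sqrt{2}$, precondition the $V$-equation by $(\delta^{2}D\partial_{yy}-K)^{-1}$, and define $G$ on $H^{2}_{\mathrm{exp},0,\eta}\times H^{1}_{\mathrm{exp},0,\eta}\times\R\times(-\delta_{0},\delta_{0})$ exactly as in \eqref{e: transcrit G def}. At $\delta=0$ the solution is $(q_{0},0)$; the joint linearization $D_{(w,a)}G$ is block triangular, with the identity in the $(2,2)$ slot and zero in the $(2,1)$ and $(2,3)$ slots, since every remaining term of the $V$-equation carries a factor $\delta^{2}$, and with $\mathcal{A}_{11}(0)=\partial_{y}^{2}+2\sqrt{2}\,\partial_{y}+2-2q_{0}$ in the $(1,1)$ slot, which is conjugate via $y\mapsto\sqrt{2}\,y$ to a scalar multiple of $\mathcal{A}_\mathrm{kpp}$ and so inherits trivial kernel, one-dimensional cokernel, and Fredholm index $-1$. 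Hence, as in Lemma~\ref{l: transcrit lin invertible}, Lemma~\ref{l: fredholm properties} together with the Fredholm bordering lemma \cite[Lemma~4.4]{ArndBjornRelativeMorse} gives that $D_{(w,a)}G$ is Fredholm index $0$, and triviality of its kernel follows because $v_{0}=0$ is forced and the reduced equation $\mathcal{A}_{11}(0)(u_{0}+\alpha\chi_{+}e^{-\sqrt{2}\,\cdot})=0$ admits no nontrivial solution: any such solution would have $\omega_{0,\sqrt{2}}u$ bounded, but the translational mode $q_{0}'\sim y e^{-\sqrt{2}\,y}$ violates this bound and the other solution grows at $-\infty$. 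The implicit function theorem then produces fronts $(W_{\mathrm{fr}},V_{\mathrm{fr}})$ with the asymptotics of Corollary~\ref{c: transcrit existence}.

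Finally, Hypothesis~\ref{hyp: no unstable point spectrum} is handled as in Proposition~\ref{p: transcrit point spectrum}: invert $(\mathcal{A}_{22}(\delta)-\gamma^{2})$ on $L^{2}_{\mathrm{exp},0,\eta}$ for $\delta,\gamma$ small, eliminate $V$ to obtain a nonlocal eigenvalue problem for the $W$-component that is an $\mathrm{O}(\delta^{2})$ perturbation of $(\mathcal{A}_{11}(\delta)-\gamma^{2})W=0$ --- using that $g_{1}$ is quadratic, so $\mathcal{A}_{21}(\delta)$ carries a factor $\delta^{2}$ and gains exponential localization, exactly the mechanism behind \eqref{e: transcrit A21 estimate} --- and then perform the far-field/core and Lyapunov--Schmidt reductions to a scalar function $E(\gamma,\delta)$, analytic in $\gamma$ and continuous in $\delta$. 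Away from the essential spectrum the problem is a regular perturbation of the Fisher--KPP eigenvalue problem, which has no spectrum in $\Re\lambda\ge 0$; near the origin $E(0,0)\neq 0$ by the analog of \cite[Lemma~4.6]{AveryGarenaux}, which transfers under the rescaling $q_{0}(y)=2q_{\mathrm{kpp}}(\sqrt{2}\,y)$ and simultaneously rules out a bounded solution of $\mcl(0)u=0$. This establishes Theorem~\ref{t: saddle-node}, and via Theorem~\ref{t: selection} the analog of Corollary~\ref{c: transcrit}: steep data for \eqref{e: saddle node 2} evolve into fronts invading $u\equiv-\sqrt{\mu}$ with speed $2\sqrt{2}\,\mu^{1/4}$. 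The only real work is bookkeeping the affine rescaling $q_{0}=2q_{\mathrm{kpp}}(\sqrt{2}\,\cdot)$ and the shift $U=W-1$, ensuring that each Fisher--KPP input --- Hypotheses~\ref{hyp: PDR}--\ref{hyp: no unstable point spectrum} for the scalar equation, the kernel structure of $\mathcal{A}_{11}(0)$, and $E(0,0)\neq 0$ --- transfers cleanly; I do not expect any new analytic obstacle beyond those already resolved in Section~\ref{s: transcrit}.
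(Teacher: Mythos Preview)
Your proposal is correct and follows exactly the approach the paper takes: the paper's own proof is a two-sentence sketch observing that the leading-order equation $W_\tau = W_{yy} + 2W - W^2$ has pulled fronts with speed $2\sqrt{2}$, that the regularization parameter is $\delta = \mu^{1/4}$, and that the transcritical argument then carries over verbatim. You have simply unpacked this sketch in full detail, correctly tracking the affine rescaling $q_0 = 2\,q_{\mathrm{kpp}}(\sqrt{2}\,\cdot)$, the shifted rates $c_* = 2\sqrt{2}$, $\eta_* = \sqrt{2}$, and the fact that the preconditioner bounds of Lemma~\ref{l: preconditioner regularity} already cover $|\eta|\le 2$.
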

	\begin{proof}
		The leading order equation $W_\tau = W_{yy} + 2W - W^2$ still admits pulled front solutions, with the linear spreading speed $c = 2 \sqrt{2}$. Regularizing the singular perturbation as in the proof of Theorem \ref{t: main}, we recover continuity in $\delta := \mu^{1/4}$, and so the result follows by the same argument. 
	\end{proof}
	
	\begin{corollary}
		Consider \eqref{e: saddle node 2} with $\mu > 0$ small. There exist open classes of steep initial data which evolve into front-like profiles propagating with the linear spreading speed $c_* = 2\sqrt{ 2} \mu^{1/4}$. 
	\end{corollary}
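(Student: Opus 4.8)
The plan is to deduce this corollary from Theorem~\ref{t: saddle-node} in precisely the way Corollary~\ref{c: transcrit} was deduced from Theorem~\ref{t: transcritical}: all of the analytic work is already contained in Theorem~\ref{t: saddle-node} together with Theorem~\ref{t: selection}, and what remains is bookkeeping for the anisotropic rescaling. First I would invoke Theorem~\ref{t: saddle-node}: for $\mu > 0$ small the rescaled system \eqref{e: saddle-node scaled 2} satisfies Hypotheses~\ref{hyp: PDR} through \ref{hyp: no unstable point spectrum}. The leading-order equation $W_\tau = W_{yy} + 2W - W^2$ linearized about $W \equiv 0$ has dispersion relation $\nu^2 + c\nu + 2 - \lambda$, whose pinched double root sits at $\nu_* = -\sqrt{2}$ for $c = c_* = 2\sqrt{2}$, so the stabilizing weight has rate $\eta_* = \sqrt{2}$. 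Applying Theorem~\ref{t: selection} with any fixed $r \in (2, 17/8)$ then yields, for each small $\eps > 0$, a set $\mathcal{U}_\eps$ of initial data for \eqref{e: saddle-node scaled 2}, open in the norm $\|\g\| = \|\rho_{0,r}\,\omega_*\,\g\|_{L^\infty}$ and containing some steep data, such that every solution started in $\mathcal{U}_\eps$ converges in the weighted sense of Theorem~\ref{t: selection} to the critical front $\q_*$ recentered about $\sigma(\tau) = c_*\tau - \frac{3}{2\eta_*}\log\tau + y_\infty$.

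Next I would undo the substitution $y = \mu^{1/4}x$, $\tau = \mu^{1/2}t$, $U(y,\tau) = \mu^{-1/2}u(x,t)$, $W = U + 1$. For fixed $\mu > 0$ this is a linear isomorphism of the relevant function spaces, so open classes of rescaled initial data correspond to open classes of initial data for \eqref{e: saddle node 2} (in the norm obtained by composing $\|\cdot\|$ with the rescaling, which is again an algebraically- and exponentially-weighted $L^\infty$ norm, now with exponential rate $\eta_*\mu^{1/4} = \sqrt{2}\,\mu^{1/4}$ in $x$), and data vanishing for $y$ large correspond precisely to data vanishing for $x$ large, i.e.\ to steep data. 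The selected front locus $y = \sigma(\tau)$ becomes
\[
x = \mu^{-1/4}\sigma(\mu^{1/2}t) = 2\sqrt{2}\,\mu^{1/4}\,t - \frac{3}{2\eta_*}\,\mu^{-1/4}\log(\mu^{1/2}t) + \mu^{-1/4}y_\infty,
\]
so the invasion proceeds at the linear spreading speed $c_*\mu^{1/4} = 2\sqrt{2}\,\mu^{1/4}$ with the usual logarithmic delay, and the emerging profile is $u(x,t) \approx \mu^{1/2}\big(\q_*(\mu^{1/4}x - 2\sqrt{2}\,\mu^{1/2}t + \cdots) - 1\big)$, a front of amplitude $\mathrm{O}(\mu^{1/2})$ and spatial width $\mathrm{O}(\mu^{-1/4})$ joining the selected state (near $u \equiv \sqrt{\mu}$) to the now-unstable state (near $u \equiv -\sqrt{\mu}$). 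Assembling these observations proves the corollary.

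Since every spectral and existence ingredient has already been established, I do not expect a substantial obstacle; the one point needing a little care is verifying that the algebraic/exponential weighted topology of Theorem~\ref{t: selection} transforms as claimed under the anisotropic scaling, which holds because, with $\mu$ fixed, the scaling is a bounded invertible change of variables; one should also record that rescaling time introduces the harmless prefactor $\mu^{-1/4}$ in front of the $\log$-delay term, which does not affect the leading-order propagation speed.
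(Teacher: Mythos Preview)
Your proposal is correct and follows precisely the approach implicit in the paper: the paper states this corollary without proof, relying on Theorem~\ref{t: saddle-node} combined with Theorem~\ref{t: selection} and then undoing the rescaling $y = \mu^{1/4} x$, $\tau = \mu^{1/2} t$, exactly as you outline. Your added bookkeeping (the explicit values $c_* = 2\sqrt{2}$, $\eta_* = \sqrt{2}$ in rescaled variables, and the conversion of the front locus back to $(x,t)$ coordinates) is correct and simply makes explicit what the paper leaves to the reader.
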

	
	\section{Supercritical pitchfork bifurcation}\label{s: pitchfork}
	We now consider a system in which the reaction kinetics undergo a supercritical pitchfork bifuraction, with general form
	\begin{align}
		u_t &= u_{xx} + \mu u - u^3 + f_0(u, v; \mu) \nonumber \\
		v_t &= D v_{xx} - K v + f_1 (u, v; \mu), \label{e: pitchfork unscaled 2}
	\end{align}
	for $(u,v) \in \R \times \R^{n-1}$ with small parameter $\mu > 0$. We again assume that $D, K \in \R^{n-1 \times n-1}$ and have strictly positive eigenvalues and satisfy \eqref{e: no turing}. We assume that $f_0$ and $f_1$ are smooth, with
	\begin{align*}
		f_0 (u, v; \mu) &= \mathrm{O}\left( u^4, u|v|^2, u^2 |v|, |v|^3, \mu u^2, \mu |v|^2 \right), \\
		f_1(u, v; \mu) &= \mathrm{O}\left(\mu |v|, u^2, |v|^2, u|v|\right)
	\end{align*}
	as $u, |v|, \mu \to 0$.
	Introducing the rescaled variables
	\begin{align}
		y = \sqrt{\mu} x, \quad \tau = \mu t, \quad U(y,\tau) = \mu^{-1/2} u(x,t), \quad V(y, \tau) = \mu^{-1/2} v(x,t),
	\end{align}
	we find
	\begin{align}
		U_\tau &= U_{yy} + U - U^3 + g_0(U, V; \mu), \nonumber\\
		\mu V_\tau &= \mu D V_{yy} - K V + \mu^{1/2} g_1 (U, V; \mu) \label{e: pitchfork 2},
	\end{align}
	where
	\begin{align*}
		g_0 (U, V; \mu) &= \mu^{-3/2} f_0 (\mu^{1/2} U, \mu^{1/2} V; \mu) \\
		g_1 (U, V; \mu) &= \mu^{-1} f_1 (\mu^{1/2} U, \mu^{1/2} V; \mu)
	\end{align*}
	are smooth in $U, V$, and $\sqrt{\mu}$.  
	\begin{thmlocal}\label{t: pitchfork}
		For $\mu > 0$ sufficiently small, the system \eqref{e: pitchfork 2} satisfies Hypotheses \ref{hyp: PDR} through \ref{hyp: no unstable point spectrum}.
	\end{thmlocal}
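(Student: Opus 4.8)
The plan is to follow the four-step template of the transcritical case (Theorem~\ref{t: transcritical}), since at leading order \eqref{e: pitchfork 2} is again a scalar KPP-type equation coupled to a slaved $V$-component of identical singularly-perturbed form. First, to obtain Hypothesis~\ref{hyp: PDR} I would linearize \eqref{e: pitchfork 2} about $(U,V)=0$ in a frame of speed $c$: since $g_0$ is cubic and $g_1$ quadratic in $(U,V)$, the nonlinear terms contribute nothing to the $U$-block and only an $\mathrm{O}(\mu)$ shift to the $V$-block, so $d_c(\lambda,\nu;\mu) = (\nu^2 + c\nu + 1 - \lambda)\det(\mu D\nu^2 + c\mu\nu I - K + \mathrm{O}(\mu) - \mu\lambda I)$. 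The scalar factor has a simple pinched double root at $(\lambda,\nu)=(0,-1)$ for $c_*=2$, and writing $\nu = -1+\tilde\nu$ it equals $\tilde\nu^2 - \lambda$, so condition i) holds with $d_{10}d_{02} = -d_V^2 < 0$ where $d_V := \det(-K + \mathrm{O}(\mu)) \ne 0$; for the $V$-block, the substitution $\kappa = \sqrt{\mu}\,k$, $\tilde\lambda = \mu\lambda$ together with \eqref{e: no turing} and \eqref{e: transcritical uniform bound symbol} gives no spectrum with $\Re\lambda\geq 0$, yielding ii) and iii). Thus Hypothesis~\ref{hyp: PDR} holds with $c_* = 2$, $\eta_* = 1$, exactly as in Lemma~\ref{l: transcrit hyp 1}.

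Next, as in Lemma~\ref{l: transcritical stability on left} the implicit function theorem produces a spatially uniform equilibrium $W_*(\mu) = (1, \mathrm{O}(\sqrt\mu))^T$ of \eqref{e: pitchfork 2} (with $V_*(\mu) = \mathrm{O}(\sqrt\mu)$ forced by the $V$-equation) whose linearization has strictly stable essential spectrum, using $\partial_U(U-U^3)|_{U=1} = -2 < 0$ and \eqref{e: no turing}. For the critical front I would set $\mu = \delta^2$, precondition the $V$-equation of the traveling-wave problem by $(\delta^2 D\partial_{yy} - K)^{-1}$ --- Lemma~\ref{l: preconditioner regularity} applies verbatim, the only bookkeeping change being that the coupling term now carries $\mu^{1/2} = \delta$ rather than $\delta^2$, still small and exponentially localized --- and use the far-field/core ansatz \eqref{e: transcrit ff core ansatz}. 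At $\delta = 0$ the reduced problem is solved by $(q_0,0)$ with $q_0$ the critical pulled front of $q_0'' + 2q_0' + q_0 - q_0^3 = 0$ connecting $1$ to $0$, translated so that $q_0(y) \sim (a_0 + y)e^{-y}$; the $w$-linearization is block-triangular with diagonal blocks $\mathcal{A}_{\mathrm{cub}} := \partial_y^2 + 2\partial_y + 1 - 3q_0^2$ and the identity, and the joint linearization in $(w,a)$ is invertible as in Lemma~\ref{l: transcrit lin invertible}, since $\mathcal{A}_{\mathrm{cub}}u = 0$ has no solution with $\omega_{0,1}u$ bounded (the translational mode $q_0' \sim ye^{-y}$ lies outside the weighted space, the other solution grows at $-\infty$). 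The implicit function theorem then produces fronts for $\delta$ small with the asymptotics \eqref{e: front asymptotics}, so Hypotheses~\ref{hyp: front existence} and \ref{hyp: left stability} hold.

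Finally, for Hypothesis~\ref{hyp: no unstable point spectrum} I would run the Pogan--Scheel reduction as in Lemmas~\ref{l: transcrit evans fcn regularity}--\ref{p: transcrit point spectrum}: invert $\mathcal{A}_{22}(\delta) - \gamma^2$ for $\delta,\gamma$ small (via the argument of Lemma~\ref{l: preconditioner regularity} with $\gamma$-dependence), eliminate $V$, and obtain a nonlocal scalar eigenvalue problem for $U$ whose nonlocal term is $\mathrm{O}(\delta)$-small; here $\mathcal{A}_{21}(\delta)$ carries a prefactor $\mathrm{O}(\delta)$ (rather than $\mathrm{O}(\delta^2)$) and is exponentially localized since $g_1$ is quadratic, while $\mathcal{A}_{12}(\delta)$ is bounded in space, as in \eqref{e: transcrit A21 estimate} and thereafter. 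Lyapunov--Schmidt reduction then yields a scalar $E(\gamma,\delta)$, analytic in $\gamma$ and continuous in $\delta$, whose zeros with $\Re\gamma > 0$ detect eigenvalues to the right of the essential spectrum. One checks $E(0,0) \ne 0$, which excludes eigenvalues bifurcating from the origin of the essential spectrum and bounded solutions of $\mcl(0)u = 0$; away from the origin the problem is a regular perturbation of $(\mathcal{A}_{\mathrm{cub}} - \lambda)u = 0$, which has no spectrum with $\Re\lambda \ge 0$. Assembling the four steps gives Theorem~\ref{t: pitchfork}.

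The main obstacle is that everything rests on the marginal spectral stability of the cubic pulled front $q_0$: Fredholm index $-1$, triviality of the kernel and one-dimensionality of the cokernel of $\mathcal{A}_{\mathrm{cub}}$ on the weighted space, and --- the real crux --- the non-degeneracy $E(0,0) \ne 0$, which encodes that $q_0$ is genuinely pulled rather than at the pushed/pulled transition. For the quadratic Fisher-KPP front these facts were established in \cite{AveryGarenaux} (in particular $E(0,0)\neq 0$ there); since the linearization and eigenvalue problem differ only by having $q_0^2$ in place of $q_0$, the same arguments carry over, so the real task is to confirm (or cite) this input, after which Steps~3 and~4 are essentially mechanical transcriptions of Section~\ref{s: transcrit}.
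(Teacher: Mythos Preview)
Your proposal is correct and follows exactly the paper's approach: the paper's proof simply says that since the leading-order equation $U_\tau = U_{yy} + U - U^3$ still admits pulled fronts and the coupling term $\mu^{1/2} g_1$ is continuous in $\delta := \sqrt{\mu}$, the argument of Theorem~\ref{t: transcritical} applies verbatim. You have spelled out the steps in more detail and, if anything, are more careful than the paper in flagging that the key spectral inputs ($E(0,0)\neq 0$, no unstable point spectrum) must be checked for the cubic pulled front rather than the quadratic one.
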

	\begin{proof}
		Since the leading order equation $U_\tau = U_{yy} + U - U^3$ still admits pulled fronts connecting $U = 1$ to $U= 0$, and the term $\mu^{1/2} g_1 (U, V; \mu)$ is still continuous in $\delta := \sqrt{\mu}$, the proof is exactly the same as that of Theorem \ref{t: transcritical}.  
	\end{proof}
	\begin{corollary}
		Consider \eqref{e: pitchfork unscaled 2} with $\mu > 0$ small. There exist open classes of steep initial data which evolve into front-like profiles propagating with the linear spreading speed $c_* = 2 \sqrt{\mu}$. 
	\end{corollary}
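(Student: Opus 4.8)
\textbf{Proof proposal for the final corollary (supercritical pitchfork, unscaled system \eqref{e: pitchfork unscaled 2}).}

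The plan is to reduce everything to Theorem~\ref{t: selection} applied to the rescaled system \eqref{e: pitchfork 2}, exactly as was done for the transcritical case in Corollary~\ref{c: transcrit}. First I would invoke Theorem~\ref{t: pitchfork}, which asserts that for $\mu>0$ small the rescaled system \eqref{e: pitchfork 2} satisfies Hypotheses~\ref{hyp: PDR} through \ref{hyp: no unstable point spectrum}; the proof of that theorem already identifies the linear spreading speed of the rescaled system as $c_*=2$ with $\eta_*=1$, coming from the leading-order scalar equation $U_\tau = U_{yy}+U-U^3$ whose critical front connects $U=1$ to $U=0$ (one first sets $U = W$ or keeps $U$ and notes the KPP-type linearization $\partial_y^2 + 2\partial_y + 1$ has the pinched double root at $(\lambda,\nu)=(0,-1)$). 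With those four hypotheses in hand, Theorem~\ref{t: selection} gives, for each $\eps>0$, an open (in the $\|\rho_{0,r}\omega_*\cdot\|_{L^\infty}$ topology) set $\mathcal U_\eps$ of initial data — containing genuinely steep data, i.e.\ data vanishing for $y$ large — whose solutions converge to the translate $\q_*(\cdot - \sigma(\tau))$ of the critical front, with $\sigma(\tau) = c_*\tau - \tfrac{3}{2\eta_*}\log\tau + x_\infty$. This is the front-selection statement for \eqref{e: pitchfork 2}.

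The remaining work is purely bookkeeping with the change of variables $y=\sqrt\mu\, x$, $\tau = \mu t$, $U = \mu^{-1/2}u$, $V=\mu^{-1/2}v$. Undoing it: a front in $(U,V)$ moving with speed $c_*=2$ in the $(y,\tau)$ frame corresponds, in the original $(x,t)$ variables, to a profile moving with speed $\tfrac{dx}{dt} = \tfrac{d}{dt}\big(\mu^{-1/2}y\big) = \mu^{-1/2}\cdot\sqrt\mu\,\tfrac{dy}{d\tau}\cdot(\text{chain rule for }\tau=\mu t)$; carrying the constants through gives $c_* \mapsto \sqrt\mu\, c_* = 2\sqrt\mu$, which is the claimed linear spreading speed. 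The logarithmic delay term and the spatial shift $x_\infty$ simply rescale and do not affect the leading-order speed. The steepness and openness of $\mathcal U_\eps$ are preserved because the change of variables is an affine rescaling of both space and amplitude: data vanishing for $y$ large pull back to data vanishing for $x$ large, and the weighted $L^\infty$ topology is preserved up to $\mu$-dependent constants, so open sets map to open sets. Thus the pulled-front profiles of \eqref{e: pitchfork 2} translate into front-like profiles in \eqref{e: pitchfork unscaled 2} propagating at speed $2\sqrt\mu$, which is the corollary.

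I do not anticipate a genuine obstacle here: the hard analytic content — verifying Hypotheses~\ref{hyp: PDR}--\ref{hyp: no unstable point spectrum} for the singularly perturbed rescaled system via the preconditioner regularization and the Lyapunov--Schmidt/$E(\gamma,\delta)$ reduction — is already discharged in the proof of Theorem~\ref{t: pitchfork} (which, as the excerpt notes, is identical to the transcritical argument of Theorem~\ref{t: transcritical}, the only new point being that $\mu^{1/2}g_1(U,V;\mu)$ is still continuous in $\delta=\sqrt\mu$, as is the cubic $g_0$). The only place that requires the tiniest care is confirming that the leading-order \emph{scalar} pitchfork front $U_{yy}+2U_y+U-U^3=0$, $U(-\infty)=1$, $U(+\infty)=0$, itself satisfies Hypotheses~\ref{hyp: PDR}--\ref{hyp: no unstable point spectrum} — but this is the classical Nagumo/pitchfork pulled front, and marginal spectral stability of its critical front is standard (e.g.\ via the same Evans-function / weighted-space arguments cited from \cite{AveryGarenaux} for Fisher--KPP, since the linearization $\partial_y^2+2\partial_y+1-3q_0^2$ again has only the translational mode $q_0' \sim ye^{-y}$ as a marginal bounded solution and no point spectrum in $\Re\lambda\ge 0$). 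Hence the corollary follows immediately once Theorem~\ref{t: pitchfork} and the scaling identity are in place.
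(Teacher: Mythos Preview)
Your proposal is correct and follows exactly the paper's approach: the paper states this corollary without proof, as it is an immediate consequence of Theorem~\ref{t: pitchfork} combined with Theorem~\ref{t: selection} and the elementary rescaling $y=\sqrt{\mu}\,x$, $\tau=\mu t$ (just as Corollary~\ref{c: transcrit} follows from Theorem~\ref{t: transcritical}). The extra details you supply about preservation of steepness and openness under the affine rescaling, and about marginal stability of the leading-order cubic front, are not spelled out in the paper but are consistent with what is implicitly being used.
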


	\bibliography{bifurcations}
	
\end{document}